\documentclass[11pt]{article}

\usepackage{amsmath, amsthm}
\usepackage{amsfonts}
\usepackage{amssymb}
\usepackage[usenames,dvipsnames]{color}
\usepackage{graphicx}
\usepackage{hyperref}
\usepackage{bm}
\usepackage{bbm}
\usepackage{listings}
\usepackage{geometry}
\usepackage{lipsum} 

\usepackage{ulem}




\usepackage{color,calc}
\definecolor{junglegreen}{rgb}{0.16, 0.67, 0.53}

\makeatletter
\definecolor{shade}{gray}{0.8}
        {
          \raggedright
        \setlength{\rightmargin}{\leftmargin}
        \setlength{\itemsep}{-12pt}
        \setlength{\parsep}{20pt}
        \begin{lrbox}{\@tempboxa}%
        \begin{minipage}{\linewidth-2\fboxsep}
        }%
        {
        \end{minipage}%
        \end{lrbox}%
        \fcolorbox{black}{shade}{\usebox{\@tempboxa}}\newline
        }%
\makeatother

\newtheorem{theorem}{Theorem}

\newtheorem{corollary}{Corollary}
\newtheorem{assumption}{Assumption}
\newtheorem{definition}{Definition}
\newtheorem{remark}{Remark}



\newcommand{\iu}{\mathrm{i}} 

\newcommand{\dint}{\displaystyle\int}



\renewcommand{\eqref}[1]{\hyperref[#1]{(\ref*{#1})}}









\newcommand{\dd}{\mathrm{d}}


\renewcommand{\dd}{{\rm d}}

\newcommand*{\norm}[1]{\lVert #1 \rVert}

\newcommand{\h}{\mathcal{H}}
\newcommand{\s}{\mathcal{S}}

\renewcommand{\P}{\mathbb{P}}

\newcommand{\re}{\mathbb{R}}
\newcommand{\bP}{\mathbf{P}}

\newcommand{\bE}{\mathbf{E}}









\newcommand*{\pref}[1]{\hyperref[#1]{(\ref*{#1})}}
\newcommand*{\refpref}[2]{\hyperref[#2]{\ref*{#1}(\ref*{#2})}}



\newcommand{\R}{\mathbb{R}}

\allowdisplaybreaks 
\newcommand{\one}[1]{\mathbbm{1}_{\{#1\}}}

\newcommand{\ubar}[1]{\underline{#1}}

\newcommand{\dE}{\hat{\textbf{E}}}
\newcommand{\ordref}[1]{\xi_#1 - \ubar{\xi}_#1}
\newcommand{\minmod}[1]{\ubar{\Theta}_#1}
\definecolor{amethyst}{rgb}{0.6, 0.4, 0.8}
\definecolor{shitbrown}{rgb}{0.55, 0.71, 0.0}
\definecolor{aqua}{rgb}{0.0, 1.0, 1.0}
\definecolor{asparagus}{rgb}{0.53, 0.66, 0.42}
\definecolor{amber(sae/ece)}{rgb}{1.0, 0.49, 0.0}
 	\definecolor{armygreen}{rgb}{0.29, 0.33, 0.13}
	\definecolor{shitbrown}{rgb}{0.43, 0.21, 0.1}
	\definecolor{brightpink}{rgb}{1.0, 0.0, 0.5}
	\definecolor{brightube}{rgb}{0.82, 0.62, 0.91}
	 	\definecolor{byzantine}{rgb}{0.74, 0.2, 0.64}
		\definecolor{chartreuse(web)}{rgb}{0.5, 1.0, 0.0}


\title{Williams' path decomposition for 
self-similar \\Markov processes in $\mathbb{R}^d$}



\author{A. E. Kyprianou\footnote{Department of Statistics, University of Warwick, Coventry
CV4 7AL, UK. Email: \texttt{andreas.kyprianou@warwick.ac.uk} }  \
 M.  Motala\footnote{Department of Mathematical Sciences, University of Bath, Claverton Down, Bath, BA2 7AY, UK. Email: \texttt{mm2980@bath.ac.uk}}, and V. Rivero\footnote{
 Centro de Investigaci\'on
 en Mathem\'aticas, A. C. 
 Calle Jalisco s/n. 36240
M\'exico.
E-mail: \texttt{rivero@cimat.mx}}
}




\begin{document}
\maketitle
\begin{abstract}The classical result of Williams \cite{Williams} states that a Brownian motion with positive  drift $\mu$  and issued from the origin is equal in law to a Brownian motion with unit negative drift, $-\mu$, run until it hits a negative threshold, whose depth below the origin is independently and exponentially distributed with parameter $2\mu$, after which it behaves like a Brownian motion conditioned never to go below the aforesaid threshold (i.e. a Bessel-3 process, or equivalently a Brownian motion conditioned to stay positive, relative to the threshold). In this article  we consider the analogue of Williams' path decomposition for a general self-similar Markov process (ssMp) on $\mathbb{R}^d$. Roughly speaking, we will prove that law of a ssMp, say $X$, in $\mathbb{R}^d$ is equivalent in law to the  concatenation of paths described as follows: suppose that 
we sample the  point $x^*$  according to the  law of the point of closest reach to the origin, sample; given $x^*$, we build $X^{\downarrow}$ having the law of $X$ conditioned to hit $x^*$ continuously without entering the ball of radius $|x^*|$; then, we construct $X^\uparrow$ to have the law of $X$ issued from $x^*$ conditioned never to enter the ball of radius $|x^*|$; glueing the path of $X^\uparrow$ end-to-end with $X^\downarrow$ via the point $x^*$ produces a process which is equal in law to our original ssMp $X$. 
In essence, Williams' path decomposition in the setting of a ssMp follows directly from an analogous decomposition for Markov additive processes (MAPs). The latter class are intimately related to the former via a space-time transform known as the Lamperti--Kiu transform. As a key feature of our proof of Williams' path decomposition, will prove the analogue of Silverstein's duality identity for the excursion occupation measure, cf. \cite{silverstein}, for general Markov additive processes (MAPs). \\

\noindent {\bf Key words:} Self-similar Markov processes, Path decompositions, fluctuation theory for Markov additive processes.\\
\noindent {\bf Mathematics Subject Classification:}  60J80, 60E10.
\end{abstract}

\section{Introduction} 
In this article we explore a number of fundamental path decompositions for a general family of  self-similar Markov processes in $\mathbb{R}^d$ through an intimate connection to Markov additive processes.

\smallskip

We recall that a classical result of Williams \cite{Williams} states that a Brownian motion with positive unit drift is equal in law to a Brownian motion with unit negative drift run until it hits a negative threshold, whose depth below the origin is independently and exponentially distributed with parameter 2, after which it behaves like a Brownian motion conditioned never to go below the aforesaid threshold (i.e. a Bessel-3 process relative to the threshold).  Williams' path decomposition is, in essence, a more detailed version of a result  of Millar \cite{millar1978} which states that, Markov processes in a rather general class, can always be split at its minimum, resulting in a conditional independence of the pre and post infimum process. 

\smallskip

It turns out that Williams' path decomposition generalises nicely to the setting of self-similar Markov processes as soon as we can understand two types of conditionings in the larger setting of ssMps which were previously discussed in the setting of positive self-similar Markov processes in~\cite{chaumont-rivero} and for isotropic stable L\'evy processes in \cite{KPS}. The first is to condition the process to continuously approach a patch on a pre-specified sphere. The second is to condition the process to remain outside of a pre-specified sphere when starting on its surface. 

\smallskip

Roughly speaking, the analogue of Williams' path decomposition we will prove states the law of a self-similar Markov process, say $X$, in $\mathbb{R}^d,$ is equivalent in law to the  concatenation of paths described as follows. Suppose that    we  sample  a point $x^*$  according to the  law of the point of closest reach to the origin. That is to say $x^*$ is a sample from the random variable $X^* : = X_{G(\infty)}$, where $G(\infty) = \sup\{t>0: \norm{X_t} = \inf_{s\geq0}\norm{X_s}\}$. Given $x^*$, we build $X^{\downarrow}$ having the law of $X$ conditioned to hit $x^*$ continuously without entering the ball of radius $|x^*|$. Moreover, we construct $X^\uparrow$ to have the law of $X$ issued from $x^*$ conditioned never to enter the ball of radius $|x^*|$. Glueing the path of $X^\uparrow$ end-to-end with $X^\downarrow,$ via the point $x^*,$ produces a process which is equal in law to our original ssMp $X$.
\smallskip

{Before we can formally state our results,  we must first  describe the two fundamental classes of processes we deal with and how they are connected via a space time transformation, then introduce some assumptions that will be in force for our main results to hold. That is the purpose of the next section.  
\section{Main assumptions}}
\subsection{Self-similar Markov processes (ssMp)}
Suppose $\h$ is a locally compact
{subset}
of $\R^{d}\setminus\{0\}$ ($d\ge 1$). An $\h$-valued self-similar Markov process (ssMp for short) $(X,\P)=((X_t)_{t \geq 0},\{\P_{z}:z\in\h\})$ is an $\h$-valued c\`adl\`ag Markov process killed at $0$ with $\P_{z}\left(X_{0}=z\right)=1$, which fulfils the scaling property, namely, there exists an $\alpha>0$ such that for any $c>0$,
$$((cX_{c^{-\alpha}t})_{t \geq 0},\P_z) \text{ has the same law as }((X_{t}, t\ge 0),\P_{cz}), \quad \forall z \in \h.$$
It follows from the scaling property that $\h=c\h$ for all $c>0$. Therefore $\h$ is necessarily a cone of $\R^{d}\setminus\{0\}$ which has the form
$$\h =\phi(\R\times \s),$$
where $\s$ is a locally compact
{subset}
of $\mathbb{S}^{d-1}$ and $\phi$ is the homeomorphism from $\R\times \mathbb{S}^{d-1}$ to $\R^{d}\setminus\{0\}$ defined by $\phi(y,\theta)=\theta\mathrm{e}^{y}$.

\smallskip

A fundamental tool in the study of ssMp is the Lamperti-Kiu transform which links the aforementioned class to the family of Markov additive processes. Let us introduce the latter before describing the nature of the Lamperti-Kiu transform. 

\subsection{Markov additive processes (MAPs)} 
Suppose $(\xi_{t},\Theta_{t}, t\ge 0)$ is the coordinate process in $\mathbb{D}({\R\times\s})$ and
$$\left((\xi,\Theta),\bP \right)=\left((\xi_{t},\Theta_{t}, t\ge 0),\mathcal{F}_{\infty},(\mathcal{F}_{t}, t\ge 0),\{\bP_{x,\theta}:(x,\theta)\in\R\times\s\}\right),$$ is a (possibly killed) Markov process with $\bP_{x,\theta}\left(\xi_{0}=x,\Theta_{0}
=\theta\right)=1$. Here $(\mathcal{F}_{t}, t\ge 0)$ is the minimal augmented and right continuous admissible filtration, and $\mathcal{F}_{\infty}=\bigvee_{t=0}^{+\infty}\mathcal{F}_{t}$.

  \begin{definition}
    The process $\left((\xi,\Theta),\bP\right)$ is called a Markov additive process (MAP) on $\R\times\s$ if, for any $t \geq 0$, given $\{(\xi_s,\Theta_s), s\leq t\}$, the process $(\xi_{s+t}-\xi_t,\Theta_{s+t})_{s\geq 0}$ has the same law as $(\xi_{s},\Theta_{s}, s\ge 0)$ under $\bP_{0,v}$ with $v=\Theta_t$. We call $\left((\xi,\Theta),\bP\right)$ a nondecreasing MAP if $\xi$ is a nondecreasing process on $\R$.
  \end{definition}

For a MAP process $\left((\xi,\Theta),\bP\right)$, we call $\xi$ the \textit{ordinate} and $\Theta$ the \textit{modulator}. By definition we can see that a MAP is conditionally translation invariant in $\xi$ in the sense that $((\xi_{t},\Theta_{t}, t\ge 0),\bP_{x,\theta})$ is equal in law to $((\xi_{t}+x,\Theta_{t}, t\ge 0),\bP_{0,\theta})$ for all $x\in \R$ and $\theta\in\s $. (The `conditional' translation is dependent on the current value of the modulator.)  We assume throughout the paper the following path regularity assumptions for the process $(\xi,\Theta)$.

\begin{assumption}\label{ass1}
The process $(\Theta_{t}, t\ge 0)$ is a Hunt process with invariant probability distribution $\pi$ on $\mathcal{S}$ and $(\xi_{t}, t\ge 0)$ is quasi-left continuous on $[0,\zeta)$. 
\end{assumption}

\smallskip

Whilst MAPs have found a prominent role in e.g. classical applied probability models for queues and dams, c.f. \cite{AsmussenQueue} when $\Theta$ is a Markov chain, the case that $\Theta$ is a general Markov process has received somewhat less attention. Nonetheless a core base of literature exists in the general setting from the 1970s and 1980s thanks to e.g. \cite{AsmussenQueue,AA,CPR,Cinlar1, CinlarI&II, CinlarMRT, KPal, Kaspi1,Kaspi2}. Sufficient conditions in terms of $(\xi,\Theta)$ for $\pi$ to exist are given in \cite{TAMS}.

\smallskip

Now that we have defined what we mean by a MAP and introduced some of its key properties,  we can return to the setting of self-similar Markov processes and describe the promised connection between the two classes of processes. 

\subsection{The Lamperti-Kiu representation}

 Suppose first that $(X,\P_z)$ is an $\h$-valued ssMp started at $z\in\h$ with index $\alpha>0$ and lifetime $\varsigma$, then there exists a Markov additive process  $(\xi,\Theta)$ on $\R \times \s$ started at $(\log \|z\|,\arg(z))$ with lifetime $\zeta$ such that
  \begin{equation}\label{eq:lamperti_kiu}
   X_t  = \exp\{\xi_{\varphi(t)}\} \Theta_{\varphi(t)}\one{t < \varsigma}, \qquad\forall t \geq 0,
  \end{equation}
   where $\varphi(t)$ is the time-change defined by
  \begin{equation}\label{eq:time_change}
   \varphi(t):=\inf\left\{s>0:\int_0^s \exp\{\alpha\xi_u\}\,{\rm d}u > t\right\},
  \end{equation}
  and $\zeta=\int_{0}^{\varsigma}\|X_{s}\|^{-\alpha}{\rm d}s$.
 We denote the law of $(\xi,\Theta)$ started from $(y,\theta)\in \R\times \s$ by $\bP_{y,\theta}$.
  Conversely given a MAP $(\xi,\Theta)$ under $\bP_{y,\theta}$ with lifetime $\zeta$, the process $X$ defined by~\eqref{eq:lamperti_kiu} is a ssMp started from $z=\theta {\rm e}^y$ with lifetime $\varsigma=\int_{0}^{\zeta}\mathrm{e}^{\alpha \xi_{s}}{\rm d}s$. 
 
\subsection{Duality}\label{dualsection}
We refer to \cite{TAMS} for the summary of duality we present here. The key notion of duality for MAPs requires the existence of the invariant distribution $\pi$ as per Assumption \ref{ass1}. 

\begin{assumption}\label{ass2} There exists a Markov additive process $((\xi,\Theta),\widetilde{\bP}) $ with probabilities $\widetilde{\bP} = (\widetilde{\bP}_{y, \theta}, y\in\mathbb{R}, \theta\in \mathcal{S})$, 
   which is linked to $((\xi,\Theta),\bP)$ through the  weak reversibility property
  \begin{equation}\label{condi:weak reversability}
      \bP_{0,\theta}(\xi_t \in {\rm d}z; \Theta_t \in {\rm d}v)\pi({\rm d}\theta) = \widetilde{\bP}_{0,v}(\xi_t \in {\rm d}z; \Theta_t \in {\rm d}\theta)\pi({\rm d}v), \quad \forall t \geq 0.
  \end{equation}
\end{assumption}  
  
Observe that by integrating \eqref{condi:weak reversability} over variable $z$, it follows that the Markov processes $\Theta$ with probabilities $(\bP_{0,\theta},\theta\in\s)$ and  $\Theta$ with probabilities $(\widetilde{\bP}_{0,\theta},\theta\in\s)$ are in weak duality with respect to the measure $\pi$. Hereafter we denote by $\hat{\bP}_{x,\theta}$ the law of $(-\xi,\Theta)$ under $\widetilde{\bP}_{-x,\theta}$. We will consistently  use the `hat' notation to specify the mathematical quantities related to the process $((\xi,\Theta),\hat{\bP})$.

It has been proved in Lemma 3.8 in \cite{TAMS} that the processes $((\xi,\Theta),\bP)$ and $((\xi,\Theta),\hat\bP)$ are in weak duality with respect to the measure ${\rm Leb} \otimes \pi$, where ${\rm Leb}$ is the Lebesgue measure on $\R$. As usual, weak duality is closely related to time reversal, having as one of its possible expressions the following, which is the the generalization of a classical and fundamental result for random walks and Lévy processes. For $t>0$ arbitrarily fixed, we have that $ \forall x\in\R$
\begin{equation}\label{lem:easy switch}
\left[\left((\xi_{(t-s)-}-\xi_{t},\Theta_{(t-s)-}),\ 0\leq s\leq t \right),\ \bP_{x,\pi} \right] \stackrel{\text{Law}}= \left[\left((\xi_{s},\Theta_{s}),\ 0\leq s\leq t\right),\ \hat{\bP}_{0,\pi}\right]. 
\end{equation}

The regularity assumption, Assumption \ref{ass3}, for $((\xi,\Theta),\bP),$ implies that, almost surely, the local minima of $\xi$ during a finite time interval are all distinct. In view of this and the above equality in law, we have the following classical result. For any fixed $t>0$, the vector $(\Theta_{0},t-\ubar{g}_{t},\Theta_{t},\ordref{t},\ubar{g}_{t},\ubar{\Theta}_{t},-\ubar{\xi}_{t})$ under $\hat{\bP}_{0,\pi},$
has the same distribution as $(\Theta_{t},\ubar{g}_{t},\Theta_{0},-\ubar{\xi}_{t},t-\ubar{g}_{t},\ubar{\Theta}_{t},\ordref{t}),$ under $\bP_{0,\pi}$. Thus, for any bounded measurable test function $H$, we have 
\begin{equation}\label{prop:equal dist}
\bE_{0,\pi}[H(\Theta_t, \ubar{g}_t, \Theta_0, -\ubar{\xi}_t,t - \ubar{g}_t, \minmod{t}, \ordref{t})  )] = \hat{\mathbf E}_{0,\pi}[ H(\Theta_0,  t - \ubar{g}_t, \Theta_t, \ordref{t},  \ubar{g}_t, \minmod{t}, -\ubar{\xi}_t ) ]. 
\end{equation} See \cite{TAMS} Proposition 3.9.

\subsection{MAP excursions from the ordinate minimum}
   
Let $\underline{\xi}_{t}:=\inf_{s\le t}\xi_{s}$ and $U_{t}:=\xi_{t}-\underline{\xi}_{t}$. Assumption \ref{ass1} implies that under $\bP_{0,\theta}$ the process $(\Theta_{t},\xi_{t},U_{t}, t\ge 0)$ is an $\s \times \R\times \R^{+}$-valued strong Markov process with right continuous and quasi-left continuos paths, starting at $(\theta,0,0)$, whose transition semigroup on $(0,+\infty)$ is given by
\[
{\mathtt P}_{t}f(v,x,u):=\bE_{0,v}\left[f\left(\Theta_{t},\xi_{t}+x,(u\vee \underline{\xi}_{t})-\xi_{t}\right)\right]\qquad t\geq0,
\]
for  nonnegative measurable function $f:\s \times\R \times\R^{+}\to \R^{+}$. 
We shall work with the canonical realization of $(\Theta_{t},\xi_{t},U_{t}, t\ge 0)$ on the sample space
{$\mathbb{D}({\s\times \R\times \R^{+}})$} (the space of c\`adl\`ag paths on $\s\times \R\times \R^{+}$).

\smallskip

We define
$\underline{M}:=\{t\ge 0 : U_{t}=0\}$ and $\underline{M}^{cl}$ its closure in $\R^{+}$.
Obviously the set $\R^{+}\setminus \underline{M}^{cl}$ is an open set and can be written as a union of intervals. We use $\underline{G}$ and $\underline{D}$, respectively, to denote the sets of left and right end points of such intervals.

\smallskip

Suppose the set $\mathbb{R}^{+}\setminus \underline{M}^{cl}$ is written as a union of random intervals $(g,d)\in \underline{G}\times \underline{D}$. For such intervals, define
\begin{equation} \label{excursionproc}
(\epsilon^{(g)}_{s},\Theta_s^{\epsilon^{(g)}}):=\begin{cases}
        (U_{g+s},\Theta_{g+s}),
         \quad &\hbox{if } 0\le s<d-g, \smallskip \\
        (U_{d},\Theta_{d}),
         \quad &\hbox{if } s\ge d-g.
        \end{cases}
\end{equation}
$(\epsilon^{(g)}_{s},\Theta_s^{\epsilon^{(g)}}, s\ge 0)$ is called an excursion from the minimum and $\zeta^{(g)}:=d-g$ is called its lifetime.
We use $\mathcal{E}$ to denote the collection
$\{(\epsilon^{(g)}_{s}(\omega),\Theta_s^{\epsilon^{(g)}}(\omega), s\ge 0):g\in \underline{G}(\omega),\ \omega\in
{\mathbb{D}({\s\times\R\times \R^{+}}})
\}$,
and call it the space of excursions. Furthermore, we denote by $(\epsilon^{(g)},\Theta^{\epsilon^{(g)}})$ the generic excursion process.

\smallskip

For any $y\in\R$, let $\tau^{-}_{y}:=\inf\{t>0:\xi_{t}<y\}$.
We say that $((\xi,\Theta),\bP)$ is
downwards regular
if
$$\bP_{0,\theta}\left(\tau^{-}_{0}=0\right)=1,\quad\forall \theta\in\s.$$

\begin{assumption}\label{ass3}
\noindent The process $\xi$ is downwards regular. 
\end{assumption}

\smallskip

This assumption implies that every point in $\s$ is regular for $\underline{M},$
{in the sense that $\bP_{0,\theta}\left(\underline{R}=0\right)=1$ for all $\theta\in\s$,}
where  $\underline{R}:=\inf\{t>0:\ t\in \underline{M}^{cl}\}$.
Thus by
{\cite[Theorem (4.1)]{maisonneuve}}
there exist a continuous additive functional $t\mapsto\underline{L}_{t}$ of
{$(\Theta_{t},\xi_{t},U_{t}, t\ge 0)$}
which is carried by $\s \times \R\times\{0\}$ and 
a kernel  ${\mathtt n}^{-}_{\theta}$ on $\mathcal{E}$ such  that
for any
{nonnegative}
measurable functionals
{$F:\mathbb{D}({\R^{+}\times \s })\to \R^{+}$}
and
{$G:\mathbb{R}^{+}\times \mathbb{D}({\mathbb{R}\times \s })\to\R^{+}$,} such that the process $u\mapsto G\big(u,(\xi_t,\Theta_t)_{t \leq u}\big)$ is left continuous,  
\begin{align}\label{eq:last exit}
   \bE_{y,\theta}&\left[ \sum_{g\in \underline{G}} G\big(g,(\xi_t,\Theta_t)_{t \leq g}\big) F\big(\epsilon^{(g)},{\Theta}^{\epsilon^{(g)}}\big) \right]\nonumber\\
   &=\bE_{y,\theta} \left[ \int_0^\infty {\rm d}\underline{L}_s\, G\big(s,(\xi_t,\Theta_t)_{t\leq s}\big) \int_{\mathcal{E}} {\mathtt n}^{-}_{\Theta_s}({\rm d}\epsilon,{\rm d}{\nu}) F(\epsilon,{\nu})  \right].
  \end{align}
  Equation \eqref{eq:last exit} goes by the name of the {\it last exit formula}.
We call the family  of measures $({\mathtt n}^{-}_{\theta}:\theta\in \s)$ the \textit{excursion measures at the minimum}.
{Moreover, by \cite[Theorem (5.1)]{maisonneuve}, ${\mathtt n}^-_{\theta}\left(\epsilon_{0}\neq0, \Theta^{\epsilon}_{0}\in \s\right)=0$, and under ${\mathtt n}^-_{\theta}$, the coordinate process, which is a generic excursion, is a Markov process with the same transition semigroup as $(\Theta, U)$ killed at its first hitting time of zero, see e.g.  \cite[(5.2)]{maisonneuve}.} The pair $(\ubar{L}, {\mathtt n}^-)$ is called an exit system. Note that in Maisonneuve's original formulation  $(\ubar{L}$, ${\mathtt n}^-)$ is not unique, but once $\ubar{L}$ is chosen, the measures $({\mathtt n}^{-}_{\theta}:\theta\in \s)$ exist and are unique up to $\ubar{L}$-negligible sets i.e. sets $\mathcal{A}$ such that $\bE_{x,\theta} \left( \int_0^\infty \one{ \Theta_s \in \mathcal{A}} \dd \ubar{L}_{s} \right)  = 0.$

\smallskip

Let $\underline{L}^{-1}$ be the right continuous inverse process of $\underline{L}$. Define $\xi^{-}_{t}:=-\xi_{\underline{L}^{-1}_{t}}$ and $\Theta^{-}_{t}:=\Theta_{\underline{L}^{-1}_{t}}$ for all $t$ such that $\underline{L}^{-1}_{t}<+\infty,$ and otherwise $\xi^{-}_{t}$ and $\Theta^{-}_{t}$ are both assigned to be the cemetery state $\partial$.
{One can verify by the strong Markov property that $(\underline{L}^{-1}_{t},\xi^{-}_{t},\Theta^{-}_{t}, t\ge 0)$ defines a MAP, whose first two elements are ordinates. Similarly, both $(\xi^{-}_{t},\Theta^{-}_{t}, t\ge 0) $ and $(\underline{L}^{-1}_{t},\Theta^{-}_{t}, t\ge 0) $ are MAPs.}
These three processes are referred to as  \textit{the descending ladder process}, \textit{the descending ladder height process} and \textit{the descending ladder time process}, respectively.

\smallskip

For later reference, we note from \cite{TAMS}, the MAP $\underline{L}^{-1} := (\underline{L}^{-1}_{t},\Theta^{-}_{t}, t\ge 0),$ on the one hand,  has  family of jump rates given by ${\mathtt n}^-_\theta(\zeta\in \dd u)$,  $ \theta\in \s$, see Proposition 2.5 in \cite{TAMS}, for further details. On the other hand, it also has a component which is absolutely continuous with respect to Lebesgue measure and instantaneous rates ${\mathtt a}^-_\theta$, $\theta\in \s$. When ${\mathtt a}^-_\theta>0$ for some $\theta$, the Lebesgue measure of the range of $\underline{L}^{-1}$ on any finite period of time is strictly positive with positive probability, and we say that $\underline{L}^{-1}$ has a `heavy range'. In that case, we have a.s.
\begin{equation}
\int_0^t \mathtt{a}_{\Theta_s}^-\dd\underline{L}_t = \int_0^t \one{\xi_s = \underline\xi_s}\dd s,\qquad  t\geq0.
\label{Revuz}
\end{equation}

\smallskip

Associated with the descending ladder processes, we can introduce the associated potential measure
\[
U^-_\theta (\dd r, \dd x, \dd \varphi) = \bE_{0,\theta}\left[\int_0^\infty \one{(\underline{L}^{-1}_{t}\in \dd r, \xi^{-}_{t}\in \dd x,\Theta^{-}_{t}\in \dd \varphi)}\dd t\right], \qquad r,x \geq0, \varphi\in \mathcal{S}.
\]
\begin{assumption}\label{ass4}The MAP ordinator $\xi$  is transient, satisfying a.s.
\[
\lim_{t\to\infty}\xi_t = \infty; \]
and  that the modulator 
\begin{equation}
 \label{prea2}
\Theta\text{ is positive recurrent with invariant distribution }\pi\text{ which is fully supported on }\s.
\end{equation}
\end{assumption}
A consequence of Assumption \ref{ass4} is that $-\underline\xi_\infty<\infty,$ $\mathbf{P}_{0,\theta}$ almost surely, for all $\theta\in\mathcal{S}$. We define
$
g_\infty = \sup\{t>0: \xi_t =\underline\xi_\infty\},
$
so that
\[
\underline{\xi}_\infty = \xi_{g_\infty}\text{ and }\underline{\Theta}_\infty = \Theta_{g_\infty},
\]
encode the MAP-analogue of the `point of  closest reach.' It is then a straightforward consequence of the last exit formula in \eqref{eq:last exit} that, for  $r,x \geq0$, $\varphi\in \mathcal{S}$,
\begin{align}
\mathbf{P}_{0,\theta} (g_\infty \in \dd r ,- \underline{\xi}_\infty \in \dd x, \,\underline{\Theta}_\infty\in \dd \varphi) &= 
\mathbf{E}_{0,\theta} \left[\sum_{g\in\underline{G}} \one{g\in \dd r,\, -\xi_{g-}\in \dd x, \, \Theta_{g-}\in \dd \varphi}\one{\zeta_g = \infty}\right]\notag\\
&=\mathbf{E}_{0,\theta} \left[\int_0^\infty\dd \underline{L}_s \one{s\in \dd r,\, -\xi_{s-}\in \dd x, \, \Theta_{s-}\in \dd \varphi}\right]{\mathtt n}^-_\varphi(\zeta = \infty)
\notag\\
&=U^-_\theta (\dd r, \dd x, \dd \varphi)
{\mathtt n}^-_\varphi(\zeta = \infty)\notag\\
&=\mathtt{U}^-_\theta (\dd r, \dd x, \dd \varphi),
\label{MAPPOCR}
\end{align}
where, for convenience, we have  defined 
\begin{equation}
\mathtt{U}^-_\theta (\dd r, \dd x, \dd \varphi) := U^-_\theta (\dd r, \dd x, \dd \varphi) 
{\mathtt n}^-_\varphi(\zeta = \infty), \qquad r,x \geq0, \varphi\in \mathcal{S}.
\label{MAPPOCR2}
\end{equation}

\smallskip

When it exists, let us also  define the invariant measure for $\Theta^-$ under $\hat{\mathbf{P}}$ by  $ \hat{\pi}^-$. {For conditions for this to happen, and expressions for $ \hat{\pi}^-$ we refer to \cite{TAMS}.}  {More generally, a hat on various quantities, e.g. $\hat{\mathtt{a}}^{-}_\theta$ will denote the same quantity without the hat albeit under $\hat{\mathbf{P}}$.}
We will need an additional  assumption that is related to Assumption \ref{ass4}. 
\begin{assumption}\label{ass5} The invariant measure $ \hat{\pi}^-$ exists and   
\[
c_{\hat{\pi}^{-}}:=\hat{\mathbf{E}}_{0,\pi}[\underline{L}^{-1}_1] =\hat{\mathtt{a}}^-_{\hat\pi^-}+ \hat{\mathtt n}^-_{\hat\pi^-}(\zeta) :=  \int_\mathcal{S} (\hat{\mathtt{a}}^-_\theta + \hat{\mathtt n}_{\theta}(\zeta))\hat\pi^-(\dd \theta)<\infty.
\]
 \end{assumption}
 
\noindent When $\xi$ is a Lévy process, it is known that the Assumption \ref{ass5} is equivalent to Assumption \ref{ass4}. Thus, heuristically speaking, Assumption \ref{ass5} should imply Assumption \ref{ass4} under certain necessary and sufficient regularity assumptions, which are yet to be completely determined in the literature.

{In \cite{TAMS} it has been proved that when \eqref{prea2} holds, and
\begin{equation}
\label{prea8}
\inf_{v\in\s}\left[\hat{\mathtt{a}}^{-}_v+\hat{\mathtt n}^-_{v}\left(1-\mathrm{e}^{-\zeta}\right)\right]>0\text{  and  }\hat{\mathtt n}^-_{v}(\zeta)<+\infty\text{ for every }v\in\s,
\end{equation}
then there is an invariant measure $\hat{\pi}^-$ and it is given by  
\begin{equation}\label{def:pi+}
\hat{\pi}^-(\cdot)=\frac{1}{c_{\hat{\pi}^{-}}}\hat{\bE}_{0,\pi}\left[\int_{0}^{1}  \one{\Theta_{s}\in\cdot}{\rm d}\underline{L}_{s}\right].
\end{equation}
 It  follows by \cite[Theorem (5.1)]{Kaspi1} that
\begin{equation}\label{kaspi-inv}
\pi({\rm d}v)=\frac{1}{c_{\hat{\pi}^{-}}}\left[\hat{\mathtt{a}}^{-}_v\hat{\pi}^{-}({\rm d}v)+\int_{\s }\hat{{\rm n}}^-_{\theta}\left(\int_{0}^{\zeta}\one{\Theta^\epsilon_{t}\in {\rm d}v}{\rm d}t\right)\hat{\pi}^{-}({\rm d}\theta)\right].
\end{equation}

}

\subsection{Absolute continuity assumptions}
We will need some assumptions asserting  absolute continuity of various measures associated to the MAP $(\xi, \Theta)$ with respect to ${\rm Leb}\otimes\pi$. The first assumption concerns the potential measures of $(\xi, \Theta).$

\begin{assumption}\label{ass6}The potentials of $((\xi, \Theta), \mathbf{P})$ and $((\xi, \Theta), \hat{\mathbf{P}})$ are absolutely continuous with respect to ${\rm Leb}\otimes\pi$, with 
$${\bE}_{x,\theta} \left[ \int_0^{\infty} \one{\xi_s \in \dd y, \Theta_s \in \dd\varphi} \dd t\right]=g((x,\theta), (y,\varphi))\pi(\dd \varphi)\dd y$$
$$\hat{\bE}_{x,\theta} \left[ \int_0^{\infty} \one{\xi_s \in \dd y, \Theta_s \in \dd\varphi} \dd t\right]=\hat g((x,\theta), (y,\varphi))\pi(\dd \varphi)\dd y.$$
 \end{assumption}
This assumption, together with the duality Assumption~\ref{ass2} imply that the densities $g$ and $\hat{g}$ can be chosen such that $$g((x,\theta), (y,\varphi))=\hat{g}((y,\varphi), (x,\theta)),\qquad x, y\in\R, \theta,\varphi\in \s. $$

\begin{remark} Notice that the assumption on the absolute continuity of the potential of $(\xi, \Theta)$ is satisfied as soon as the transition semigroup of $(\xi, \Theta)$ 
\[
\mathcal{P}_t ((x, \theta), (\dd z, \dd \beta)): = \mathbf{P}_{x,\theta}(\xi_t\in\dd z, \, \Theta_t\in \dd\beta), \qquad x,z\in \mathbb{R},\, \theta, \beta\in\mathcal{S}, t\geq0,
\] 
 is absolutely continuous with respect to ${\rm Leb}\otimes\pi.$ That is, 
 \[\mathcal{P}_t ((x, \theta), (\dd z, \dd \beta)) = p_t((x,\theta), (z,\beta)) \pi(\dd \beta) \dd z,
 \]
 for $t\geq0$, $x,z\in \mathbb{R}$ and $\theta, \beta\in\mathcal{S}$. Similiarly, the assumption on the absolute continuity of the potential of the dual process is satisfied as soon as its transition semigroup is absolutely continuous with respect to ${\rm Leb}\otimes\pi.$ 

\end{remark}

\smallskip

{In a similar spirit, we use an absolutely continuity assumption for the descending ladder potential.}
For both $U^-_\theta$, respectively $\mathtt{U}^-_\theta$, defined in \eqref{MAPPOCR},  resp. \eqref{MAPPOCR2},  with an abuse of notation, we will denote their space marginals by 
\begin{equation}\label{time-marginals}
\mathtt{U}^-_\theta ( \dd x, \dd \varphi):=\mathtt{U}^-_\theta ([0,\infty), \dd x, \dd \varphi), \quad \text{and}\quad U^-_\theta (\dd x, \dd \varphi):=U^-_\theta ([0,\infty), \dd x, \dd \varphi), \quad x\in \R, \varphi\in\mathcal{S}.
\end{equation}
To establish our main result, we will need to assume additionally that $\mathtt{U}_\theta$ (and hence $U_\theta^-$) has a density in the following two  different possible ways, the second being stronger than the first.

\begin{assumption}\label{ass7}
For all $x\geq0$ and $\theta, \varphi\in \mathcal{S}$, 
\[
\mathtt{U}^-_\theta ( \dd x, \dd \varphi) = {\mathtt u}^-_\theta(x, \dd \varphi)\dd x.
\]
\end{assumption}

\begin{assumption}\label{ass8}
There is a measure $\Xi$ on $\s$ such that for all $x\geq0$ and $ \theta, \varphi\in \mathcal{S}$, 
\[
\mathtt{U}^-_\theta ( \dd x, \dd \varphi) = {\mathtt u}^-_\theta(x, \varphi)\Xi(\dd \varphi)\dd x,
\] with ${\mathtt u}^-_\theta:[0,\infty)\times\s\mapsto[0,\infty),$ a continuous function for each $\theta\in \s.$
\end{assumption}

\section{Main results}

In order to state our main result, let us introduce further notation. Under Assumption \ref{ass7}, for a given measurable set $\Omega\subset\mathcal{S},$ define the function
\begin{equation}
\mathtt{h}^\downarrow(x, \theta; \Omega) := \int_\Omega {\mathtt u}^-_\theta(x, \dd \varphi), \qquad x\geq0, \theta\in \mathcal{S}.
\label{hdownarrow}
\end{equation}
Moreover, if the stronger Assumption \ref{ass8} is in force, we may further define 
\begin{equation}
\mathtt{h}^\downarrow(x, \theta; \varphi) := {\mathtt u}^-_\theta(x,  \varphi), \qquad x\geq0, \theta,\varphi \in \mathcal{S}.
\label{hdownarrow2}
\end{equation}
In both cases, we will denote
\begin{equation}
\mathtt{H}^\downarrow(x ; \Omega) := \mathtt{h}^\downarrow(\log\norm{x}, \arg(x); \Omega), \qquad x\in \mathcal{H}.
\label{Hdown}
\end{equation}
Finally, we define the $d$-dimensional sphere of radius $r$
\[
\mathbb{S}^{d-1}(r) = \{x\in\mathbb{R}^d: |x| = r\}, \qquad r>0.
\]

Leading up to the main result of this paper, Williams' type path decomposition of a self-similar Markov process, we first establish that the ssMp conditioned to continuously hit a patch on a sphere from one side can be defined via a limiting procedure. The ssMp conditioned to continuously hit the sphere will form the basis of the so-called `pre-minimum' process in the path decomposition. Moreover, the law of this process can be obtained from a Doob $h$-transform of the killed ssMp. 

\begin{theorem}\label{condition1}Suppose that Assumptions \ref{ass1}, \ref{ass3}, \ref{ass4} and \ref{ass7} hold. 
Fix a measurable set $\Omega\subset \mathcal{S}$ such that $\sigma_1(\Omega)>0,$ and $x\in\mathcal{H}$ and $y\in \R,$ such that $\norm{x}>{\rm e}^y$. Then we can define the ssMp conditioned to continuously hit the patch ${\rm e}^y\Omega: = \{z\in\mathbb{S}^{d-1}({\rm e}^y): \arg(z)\in \Omega\}$ without entering $\mathbb{S}^{d-1}({\rm e}^y)$ via the limit
\begin{equation}
\mathbb{P}_x^{\downarrow, (y,\Omega)}(A,\, t<\mathtt{k}) = \lim_{\varepsilon\to0}
\mathbb{P}_x(A, t < T_{\exp(y)} | \log\norm{X_{G(\infty)}} \in (y,y+\varepsilon), \arg(X_{G(\infty)})\in \Omega ),
\label{condtioningdown}
\end{equation}
which holds for any $A\in\mathcal{F}_t;$ where $\mathtt{k}$ denotes the path lifetime on $\mathbb{D}({\R\times\s})$,  
\[
G(\infty)= \sup\{t>0: \norm{X_t} = \inf_{s\geq0}\norm{X_s}\}\,\,\text{ and }\,\,
T_{z} = \inf\{t>0 : \norm{X_t}< z\}.
\]
We also have that 
\begin{equation}
\left.\frac{\dd \mathbb{P}^{\downarrow, (y,\Omega)}_x}{\dd \mathbb{P}_x}\right|_{\mathcal{F}_t\cap\{t<\zeta\}} 
= 
 \frac{\mathtt{H}^\downarrow(X_t{\rm e}^{-y}; \Omega)}{\mathtt{H}^\downarrow(x{\rm e}^{-y}; \Omega)}
 \one{t<T_{{\rm e}^{y}}}, \qquad t, \geq0, \norm{x}> {\rm e}^y.
\label{doobdown}
\end{equation}

If in addition, we assume Assumption \ref{ass8} (which supersedes Assumption \ref{ass7}), then \eqref{condtioningdown} and \eqref{doobdown} both make sense when $\Omega = \{\theta\}$, for $\theta\in\mathbb{S}^{d-1}$, providing we replace the conditioning in \eqref{condtioningdown} by the event $\{\norm{X_{G(\infty)} - e^y\theta}<\varepsilon \}$.
\end{theorem}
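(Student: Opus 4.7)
The plan is to recast the conditioning at the level of the underlying MAP via the Lamperti--Kiu transform and then read off the limit from the density of the point of closest reach provided by \eqref{MAPPOCR}. Writing $(x_0,\theta_0):=(\log\norm x,\arg x)$ and using \eqref{eq:lamperti_kiu}, the conditioning event translates to $\{\underline\xi_\infty\in(y,y+\varepsilon),\,\underline\Theta_\infty\in\Omega\}$ under $\mathbf P_{x_0,\theta_0}$, while $\{t<T_{\mathrm e^y}\}$ becomes $\{\underline\xi_{\varphi(t)}>y\}$. Translation invariance in the ordinate, Assumption~\ref{ass7}, and identity \eqref{MAPPOCR} give the denominator
\[
\mathbf P_{x_0,\theta_0}(\underline\xi_\infty\in(y,y+\varepsilon),\,\underline\Theta_\infty\in\Omega)=\int_{x_0-y-\varepsilon}^{x_0-y}\mathtt h^\downarrow(s,\theta_0;\Omega)\,\dd s,
\]
which, at every Lebesgue point of $s\mapsto\mathtt h^\downarrow(s,\theta_0;\Omega)$, is asymptotic to $\varepsilon\,\mathtt H^\downarrow(x\mathrm e^{-y};\Omega)$; under Assumption~\ref{ass8} this holds pointwise in $y$.

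For the numerator I would next apply the strong Markov property of $X$ at the fixed time $t$ and decompose according to the position of the running minimum $\underline\xi_{\varphi(t)}$. In Case~A, where $\underline\xi_{\varphi(t)}\ge y+\varepsilon$, the global minimum of $\xi$ must be attained strictly after time $\varphi(t)$; applying \eqref{MAPPOCR} to the conditional future law at $(\xi_{\varphi(t)},\Theta_{\varphi(t)})$ produces
\[
\mathbf P_{\xi_{\varphi(t)},\Theta_{\varphi(t)}}(\underline\xi_\infty\in(y,y+\varepsilon),\,\underline\Theta_\infty\in\Omega)=\int_{\xi_{\varphi(t)}-y-\varepsilon}^{\xi_{\varphi(t)}-y}\mathtt h^\downarrow(s,\Theta_{\varphi(t)};\Omega)\,\dd s.
\]
Dividing by $\varepsilon$, Lebesgue's differentiation theorem pointwise in the path (valid for a.e.\ $y$ under Assumption~\ref{ass7}, pointwise under the continuity provided by Assumption~\ref{ass8}), together with the monotone convergence $\mathbf 1_{\underline\xi_{\varphi(t)}\ge y+\varepsilon}\nearrow\mathbf 1_{t<T_{\mathrm e^y}}$ and dominated convergence, yield
\[
\varepsilon^{-1}\cdot\bigl[\text{Case A contribution}\bigr]\longrightarrow\mathbb E_x\bigl[\mathbf 1_A\,\mathbf 1_{t<T_{\mathrm e^y}}\,\mathtt H^\downarrow(X_t\mathrm e^{-y};\Omega)\bigr].
\]
Forming the ratio with the denominator's asymptotic produces precisely the Doob expression \eqref{doobdown}, and excessivity of $\mathtt H^\downarrow(\cdot\mathrm e^{-y};\Omega)$ for the ssMp killed upon entering $\overline B(0,\mathrm e^y)$, needed to view the ratio as a bona fide $h$-transform, follows from the strong Markov property together with \eqref{MAPPOCR}.

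The hard part will be to dispose of Case~B, where $\underline\xi_{\varphi(t)}\in(y,y+\varepsilon)$: these are paths whose past running minimum has already entered the target strip by time $t$, and one needs their contribution to the numerator to be $o(\varepsilon)$ so as not to perturb the limit. In the Williams picture such configurations correspond to $\{t\ge\mathtt k\}$, which must therefore be absent from the $\{t<\mathtt k\}$ part of the decomposition. The argument will combine the downwards regularity of $\xi$ (Assumption~\ref{ass3}), which excludes atoms of $\underline\xi_{\varphi(t)}$ at the critical level~$y$, with the absolute continuity of the transition semigroup (Assumption~\ref{ass6}) applied to the joint law of $(\xi_{\varphi(t)},\Theta_{\varphi(t)},\underline\xi_{\varphi(t)},\underline\Theta_{\varphi(t)})$ to extract an extra vanishing factor from the downward regularity of the future excursion. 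For the singleton case under Assumption~\ref{ass8}, the same scheme applies after replacing the kernel $\mathtt u^-_\theta(s,\dd\varphi)$ by the density $\mathtt u^-_\theta(s,\varphi)\,\Xi(\dd\varphi)$ and evaluating at $\varphi=\theta$; joint continuity of $\mathtt u^-$ in its arguments then allows the Lebesgue-differentiation step to survive pointwise in both $s$ and the modulator, after which the remainder of the argument is unchanged.
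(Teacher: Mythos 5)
Your overall route — lift the conditioning to the MAP, use \eqref{MAPPOCR} and Lebesgue differentiation for the asymptotics of the conditioning event, apply the Markov property at time $t$, then pass to the limit — is the same skeleton as the paper's, but there are two genuine gaps in your execution, and they are exactly the places where the paper invokes a tool you have not brought to bear.

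The first and more serious gap is the passage from pointwise convergence of $\varepsilon^{-1}\mathbf{P}_{\xi_t,\Theta_t}(\underline{\xi}_\infty\in(y,y+\varepsilon),\underline{\Theta}_\infty\in\Omega)$ to convergence of its expectation. You write ``together with \ldots dominated convergence,'' but you do not exhibit a dominating function, and none is readily available: Lebesgue differentiation gives a.e.\ pointwise convergence of the ratio to $\mathtt{h}^\downarrow(\xi_t,\Theta_t;\Omega)$, not domination by it. (A Hardy--Littlewood-type maximal bound would work, but then one must know the maximal function is $\mathbf{P}_{x,\theta}$-integrable, which is not free.) The paper sidesteps this entirely: it uses Fatou to obtain a lower bound on the $\liminf$ for $A$ and for $A^c$, and then closes the sandwich using the \emph{invariance} of $\mathtt{h}^\downarrow$ — Theorem \ref{thm:invariance}, proved via the Silverstein-type duality identity of Theorem \ref{mehartheorem} — which gives $\mathbf{E}_{x,\theta}[\mathtt{h}^\downarrow(\xi_t,\Theta_t;\Omega)\one{t<\tau^-_0}]=\mathtt{h}^\downarrow(x,\theta;\Omega)$ exactly, so that the two Fatou bounds sum to $1$ and are forced to be equalities. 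Your proposal never invokes Theorem \ref{thm:invariance}; you gesture at excessivity ``from the strong Markov property together with \eqref{MAPPOCR},'' but excessivity alone is not enough for this sandwich, and proving genuine harmonicity is where most of the work lives.

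The second gap is Case B. You correctly flag that paths whose running minimum has already dipped into $(y,y+\varepsilon)$ by time $t$ create a discrepancy between the numerator and the Markov-property rewrite, and you correctly guess it should be $o(\varepsilon)$. But the strategy you sketch (downward regularity plus absolute continuity of the full transition semigroup, Assumption~\ref{ass6}) is heavier machinery than what is available under the hypotheses of Theorem \ref{condition1}: Assumption~\ref{ass6} is not in force here. The Fatou--plus--harmonicity argument makes this estimate unnecessary: one only needs a one-sided inequality from the Markov decomposition, and the deficit is absorbed when the two $\liminf$'s are shown to sum to one. In short, the paper replaces both your dominated-convergence appeal and your Case~B estimate by a single use of the harmonicity in Theorem~\ref{thm:invariance}; without that ingredient your argument does not close.
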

There are a number of remarks we can make concerning the assumptions and the statement of Theorem \ref{condition1}. Assumptions \ref{ass1} and \ref{ass3} is nothing more than a regularity assumption on the underlying MAP, to ensure that certain subtleties do not need to be accounted for. {An example of what the aforesaid regularity excludes would be the possibility of jumping away from local minima, which would result in excursions from the infimum starting with a jump in the additive part. The transience Assumption \ref{ass4} is necessary in order to have a global minimum. Assumptions \ref{ass7} and \ref{ass8} impose certain density requirements on the potentials ${\mathtt u}_\theta(\dd r, \dd \vartheta)$ which coincide with the ability to hit both radial positioned patches such as in the theorem, as well as individual points. The respective existence of the density of ${\mathtt U}_\theta(\dd r, \dd \vartheta)$ is not unusual when one considers that a similar need for the existence of a density of underlying potentials are required for similar questions in the theory of L\'evy processes, in particular in the setting of stable processes, and in general for building bridges of Markov processes. See, for example, Kesten \cite{kesten69} and Kyprianou et al. \cite{KPS}.

\smallskip

The  conditioning in Theorem \ref{condition1} features already in our Williams' path decomposition below. However, we need {still} another notion of conditioning, which is actually easier to construct, and which we describe below. 
Suppose Assumptions \ref{ass1} and  \ref{ass3},  \ref{ass4} hold. Then we can define the ssMp conditioned never to enter $\mathbb{S}^{d-1}({\rm e}^y)$ via
\[
\mathbb{P}_x^{\uparrow, (y)}(A) =\mathbb{P}_x(A  | T_{e^y} =\infty), \qquad A\in\mathcal{F}_t.
\]
In particular 
\begin{equation}
\left.\frac{\dd \mathbb{P}^{\uparrow, (y)}_x}{\dd \mathbb{P}_x}\right|_{\mathcal{F}_t\cap\{t<\zeta\}} 
= 
\frac{\mathtt{H}^\uparrow(X_t{\rm e}^{-y})}{\mathtt{H}^\uparrow(x{\rm e}^{-y})}\one{t<T_{e^y}}, \qquad t, \beta\geq0, \norm{x}>{\rm e}^y,
\label{doobup}
\end{equation}
where 
\begin{equation}
\mathtt{H}^\uparrow(x) = \mathbb{P}_x(T_{1}=\infty)>0, \qquad x\in \mathcal{H}.
\label{Hup}
 \end{equation}
 In addition, we can extend the definition of $\mathbb{P}_{x}^{\uparrow, (y)}, \norm{x}>{\rm e}^y,$ to allow for an entrance law on $\mathbb{S}^{d-1}({\rm e}^y)$ given by 
 \begin{equation}
 \mathbb{E}^{\uparrow, (y)}_x [f(X_t)]= \frac{{\mathtt n}^-_{\arg(x)}\left(
 f(
 {\rm e}^{\epsilon_{\varphi(t)}+y}\Theta^\epsilon_{\varphi(t)}
 )
 \mathtt{H}^\uparrow({\rm e}^{\epsilon_{\varphi(t)} }\Theta^\epsilon_{\varphi(t)}), \varphi(t)<\zeta  \right)
 }
 {
 {\mathtt n}^-_{\arg(x)} (\zeta = \infty)
 }, \qquad x\in \mathbb{S}^{d-1}({\rm e}^y),
 \label{doobup0}
 \end{equation}
 where   $f$ is a bounded measurable function  on $\{x\in\mathbb{R}^d: \norm{x}> {\rm e}^y\}$ and 
 \[
 \varphi(t) = \inf\left\{ s>0 : \int_0^s {\rm e}^{\alpha \epsilon_u}\dd u >t \right\}.
 \]

\begin{theorem}[Williams' path decomposition]\label{Wthrm} Suppose that Assumptions \ref{ass1}, \ref{ass2}, \ref{ass3}, \ref{ass4}, \ref{ass5}, \ref{ass6} and \ref{ass8} hold.  
Then $(X,\P)$ is equal in law to the following concatenation of processes:
for $x\in\mathcal{H},$ pick a point $X^*$ in $\mathcal{H},$ such that 
\begin{equation}
\mathbb{E}_x[f(X^*)] = \int_{[0,\infty)\times \mathcal{S}}f(\norm{x}{\rm e}^{-y}\varphi){\mathtt{U}^-_{\arg(x)}(\dd y, \dd \varphi)}. 
\label{x*}
\end{equation}
Given   $X^* = x^*$, construct the independent processes \[(X, \mathbb{P}_x^{\downarrow, (\log \norm{x^*} , \{\arg(x^*)\})})\quad\text{and}\quad (X, \mathbb{P}_{x^*}^{\uparrow, (\log\norm{x^*})}).\]
Then the concatenation of these two processes under the randomisation \eqref{x*} is equal in law to $(X, \mathbb{P})$.
\end{theorem}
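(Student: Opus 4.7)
The overall approach is to reduce Williams' decomposition for the ssMp to an equivalent decomposition for the underlying MAP $(\xi,\Theta)$ via the Lamperti--Kiu transform~\eqref{eq:lamperti_kiu}, and then to establish the MAP version by combining the last exit formula~\eqref{eq:last exit} with the time-reversal duality~\eqref{prop:equal dist} and the Silverstein-type identity for the excursion occupation measure that the paper establishes. Concretely, under $\phi$ the point of closest reach $X^*$ of $X$ corresponds to $(\underline\xi_\infty,\underline\Theta_\infty)$, whose joint law under $\mathbf{P}_{0,\theta}$ is $\mathtt{U}^-_\theta(\dd y,\dd\varphi)$ by~\eqref{MAPPOCR}; this matches the randomisation~\eqref{x*} after rescaling by $\|x\|$ and reindexing $x=\theta\rme^0$. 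Moreover, $T_{\rme^y}$ corresponds to $\tau^-_y$, and the conditionings~\eqref{doobdown}--\eqref{doobup0} translate to MAP Doob $h$-transforms by $\mathtt{h}^\downarrow$ and $\mathtt{h}^\uparrow$. Thus it suffices to prove that, under $\mathbf{P}_{0,\theta}$, the MAP path is equal in law to the concatenation at an independent sample $(\underline\xi_\infty,\underline\Theta_\infty)\sim\mathtt{U}^-_\theta$ of the $\mathtt{h}^\downarrow$-transformed pre-minimum piece and the $\mathtt{h}^\uparrow$-transformed post-minimum piece, with the obvious space-shift.

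The post-minimum decomposition is essentially immediate from the exit system. Applying~\eqref{eq:last exit} with $F$ adapted to $(\xi_t,\Theta_t)_{t\leq g_\infty}$ and $G(\epsilon^{(g)},\Theta^{\epsilon^{(g)}})\one{\zeta_g=\infty}$ yields
\[
\mathbf{E}_{0,\theta}\!\left[F\big|_{g_\infty}\,G\big((\xi_{g_\infty+t}-\underline\xi_\infty,\Theta_{g_\infty+t})_{t\geq0}\big)\right]
=\mathbf{E}_{0,\theta}\!\left[\int_0^\infty \dd\underline L_s\,F|_s\int_{\mathcal{E}}{\mathtt n}^-_{\Theta_s}(\dd\epsilon,\dd\nu)\,G(\epsilon,\nu)\one{\zeta=\infty}\right].
\]
This simultaneously identifies $(g_\infty,\underline\xi_\infty,\underline\Theta_\infty)$ as having law $\mathtt{U}^-_\theta$ and shows that, conditionally on this triple being $(r,-x,\varphi)$, the post-minimum fragment is independent of $(\xi,\Theta)_{t\leq g_\infty}$ and distributed as the excursion under ${\mathtt n}^-_\varphi(\,\cdot\,|\,\zeta=\infty)$. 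By Maisonneuve's Markov property of the generic excursion and the Doob transform by $\mathtt{h}^\uparrow$, this is exactly the entrance law~\eqref{doobup0} of $(X,\mathbb{P}^{\uparrow,(\log\|x^*\|)}_{x^*})$ after Lamperti--Kiu inversion.

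For the pre-minimum part I would apply the time-reversal identity~\eqref{prop:equal dist}. Reversing $(\xi_t,\Theta_t)_{t\leq g_\infty}$ from its endpoint produces, under $\hat{\mathbf P}$, a piece of the dual MAP run from $0$ that stays strictly positive up to its final visit to $0$, which is precisely a generic excursion from the supremum under the dual excursion measure $\hat{\mathtt n}^+$. The Silverstein-type identity established in the paper (the MAP analogue of Silverstein's \cite{silverstein} computation) expresses the occupation density of this excursion in terms of the descending potential density ${\mathtt u}^-_\theta(x,\varphi)$ of $(\xi,\Theta)$; equivalently, the law of the excursion killed at the last zero coincides, up to an absolutely continuous change of measure by ${\mathtt u}^-_\theta(\,\cdot\,,\varphi)=\mathtt{h}^\downarrow(\,\cdot\,,\,\cdot\,;\varphi)$, with the dual MAP killed on entering $(-\infty,0]$. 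Reversing time once more, invoking the duality of densities $g=\hat g\circ\mathrm{swap}$ from Assumption~\ref{ass6} and the absolute continuity Assumption~\ref{ass8}, identifies the pre-minimum fragment, conditionally on $(\underline\xi_\infty,\underline\Theta_\infty)=(-x,\varphi)$, with the forward MAP killed at $\tau^-_{-x}$ and Doob $h$-transformed by $\mathtt{h}^\downarrow(\,\cdot\,,\,\cdot\,;\varphi)$. This is the MAP version of $(X,\mathbb{P}^{\downarrow,(\log\|x^*\|,\{\arg(x^*)\})}_x)$ defined in Theorem~\ref{condition1}.

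The main obstacle is the pre-minimum identification: showing that Silverstein's identity extends to the MAP setting in sufficient generality to handle the pointwise $h$-function $\mathtt{h}^\downarrow(\,\cdot\,,\,\cdot\,;\varphi)$, which is why Assumption~\ref{ass8} (continuity of ${\mathtt u}^-_\theta(x,\varphi)$ in $\varphi$) is required rather than merely Assumption~\ref{ass7}. Once this identity is in hand, assembling the two conditionally independent pieces along the randomisation of $(\underline\xi_\infty,\underline\Theta_\infty)$, applying the Lamperti--Kiu map~\eqref{eq:lamperti_kiu} (which respects concatenation through the time change~\eqref{eq:time_change}), and using Assumption~\ref{ass5} to guarantee that $\hat\pi^-$ is a genuine probability supporting the duality chain, yields the ssMp statement. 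The final verification that~\eqref{x*} is the $\phi$-image of $\mathtt{U}^-_{\arg(x)}$ is routine scaling, since $y\mapsto\rme^{-y}$ turns the ordinate minimum into the radial minimum.
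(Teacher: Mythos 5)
Your proposal follows essentially the same route as the paper: reduce to the MAP via Lamperti--Kiu, read off the post-minimum fragment directly from the last-exit formula~\eqref{eq:last exit} as the excursion ${\mathtt n}^-_\varphi(\,\cdot\mid\zeta=\infty)$, and handle the pre-minimum fragment by double time-reversal combined with the Silverstein-type occupation identity of Theorem~\ref{mehartheorem} (and its corollary, the invariance of $\mathtt{h}^\downarrow$ from Theorem~\ref{thm:invariance}). The paper's proof of Theorem~\ref{Wthrm} is exactly this scheme, carried out on finite-dimensional test functionals $F=\prod_i f_i(\xi_{s_i},\Theta_{s_i})$.

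The one step your sketch elides, and which the paper spends real effort on, is the passage from the finite-horizon decomposition at $g_t$ to the global one at $g_\infty$: after reversing time on $[0,t]$ and applying the last-exit formula under $\hat{\mathbf P}$, the paper expresses the pre-minimum contribution as an integral against $\hat{U}^-_\pi(\dd\ell,\dd\theta)$ of a functional of $\hat{\mathtt n}^-_\theta$ evaluated at time $t-\ell$, and then invokes the Markov Renewal Theorem (Alsmeyer's version, Theorem 1 of \cite{AlsmeyerTheMR}) to pass $t\to\infty$. This requires verifying the direct-Riemann-integrability condition~\eqref{integrability_condition}, which is precisely where Assumption~\ref{ass5} enters; without flagging that step, your argument ``apply~\eqref{prop:equal dist}, reverse at $g_\infty$'' does not yet make sense because the reversal identity is stated for fixed finite $t$. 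Likewise, your phrase ``the excursion killed at the last zero coincides, up to a density ${\mathtt u}^-_\theta$, with the dual MAP killed on entering $(-\infty,0]$'' compresses two distinct ingredients that the paper keeps separate: the occupation identity~\eqref{occupation_identity}, which converts the $\hat{\mathtt n}^-$-expectation of a path integral into an integral against $\mathtt{U}^-_\nu$, and the Blumenthal--Getoor duality of the killed resolvents $\hat{G}^\dagger$ and $G^\dagger$ (from the proof of Theorem~\ref{thm:invariance}, under Assumption~\ref{ass6}), which is what lets one swap $\hat{\mathtt P}^\dagger$-iterates for $\mathtt{P}^\dagger$-iterates and then telescope in $\mathtt{h}^\downarrow$ to produce the $\mathtt{P}^{\downarrow,(0,\theta)}$ semigroup. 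A small notational slip: the measure that appears after reversal is $\hat{\mathtt n}^-$ (excursion from the infimum of the dual), not an $\hat{\mathtt n}^+$; it is the same object up to the paper's sign convention, but worth keeping straight since $\hat{\mathtt n}^-$ is what the occupation identity is stated in terms of. Finally, the conditional independence of the two fragments is not automatic from constructing each one separately; the paper establishes it with one more application of the last-exit formula (jointly in $H$, $G$, $F$), which is the step that actually justifies ``assembling the pieces along the randomisation of $(\underline\xi_\infty,\underline\Theta_\infty)$.''
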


The rest of this paper is organised as follows. In Section~\ref{section4} we establish and prove a key identity relating the potential of the excursion measure from the radial infimum of the dual MAP and the potential measure of the downward ladder height process, this is the content of Theorem~\ref{mehartheorem}. This complements our main results and we think it is of particular interest as it extends a very useful result of Silverstein~\cite{silverstein} for L\'evy processes to the MAP setting.  Then, in Subsection~\ref{4.2}, in the same spirit as in \cite{silverstein}, we prove in Theorem~\ref{thm:invariance} that the density of the potential measure of the downward ladder height process is an invariant function for the MAP killed at the first passage time below zero. Then we devote Subsection~\ref{4.2} to the proof of Theorem~\ref{condition1}. In Section~\ref{S5} we focus on proving Theorem~\ref{Wthrm}, by means of establishing the respective result for the underlying MAP. This is done making heavy use of the duality assumptions, jointly with the invariance result proved in Theorem \ref{thm:invariance}. We then focus on establishing the consequences of our results for two specific examples. They are the cases of Isotropic Stable processes in Section~\ref{Isostable} and  two-dimensional Brownian conditioned to stay in a cone, in Section~\ref{BMcone}. Although these are examples that have previously been considered in the literature, our general results for ssMps and MAPs  provide new information about these processes.     

\section{Calculations for MAPs}\label{section4}

In order to prove our main theorems in the previous section, we need to first address a number of related results for the MAP that underlies our ssMp. Ignoring the time change in the Lamperti--Kiu transformation, conditioning our ssMp to continuously hit 
a point (written in generalised polar coordinates) ${\rm e}^y\theta$, for $y\in\mathbb{R},$ and $\Omega\subset\mathcal{S}$, we see that there is a connection with $(\xi, \theta)$ continuously hitting the point $(y, \theta)$. Indeed, because of spatial homogeneity, without loss of generality, we can equivalently consider the case of $(\xi, \Theta)$ continuously hitting the point $(0, \theta)$.


\subsection{Excursions and duality}

We provide a new result as far as duality is concerned, which is analogous to a classical identity for L\'evy processes; see equation (3.3) of \cite{silverstein} or Exercise VI.6.5 p183 in Bertoin \cite{bertoin1998cambridge}. 


\begin{theorem}\label{mehartheorem} Suppose that the Assumptions  \ref{ass1},  \ref{ass2}, \ref{ass3}, \ref{ass4} and \ref{ass5} are satisfied. 
We have, for bounded measurable functions $f, h:\mathcal{S}\rightarrow \R$,  and $g:[0,\infty)\rightarrow \R$,
\begin{equation}
\begin{split}
    \label{occupation_identity}
   &\int_{\s}  \hat{\mu}^{-}(\dd\theta) h(\theta)\left\{\hat{\mathtt{a}}^-_\theta  f(\theta)g(0)+ \hat{\mathtt n}^-_{\theta}\left(\int^{\zeta}_{0}
    f(\Theta^\epsilon_s)g(\epsilon_{s}) \dd s \right)\right\}\\ 
    &\hspace{3cm}= \int_{\s}\pi(\dd\nu)f(\nu)\int_{[0,\infty)\times \s} \mathtt{U}_\nu^-(\dd y,\dd\theta)g(y)h( \theta),
    \end{split}
\end{equation}
where, 
\[
\hat{\mu}^-(\dd \theta) =\left( \hat{\mathtt{a}}^-_{\hat\pi^-}+ \hat{\mathtt n}_{\hat\pi^-}(\zeta)\right)^{-1}\hat{\pi}^-(\dd \theta),
\qquad
\theta\in \mathcal{S}.
\]
\end{theorem}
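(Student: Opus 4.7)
The plan is to compute the time-averaged quantity
\[
A(t) := \hat{\mathbf{E}}_{0,\pi}\!\left[\int_0^t f(\Theta_s)\, g(\xi_s - \underline{\xi}_s)\, h(\underline{\Theta}_s) \,\dd s\right]
\]
in two independent ways and to identify $\lim_{t\to\infty} A(t)/t$ with each side of \eqref{occupation_identity}.

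First, I would decompose the integrand according to whether $s\in\underline{M}$ or $s$ lies in an excursion interval. On $\underline{M}$, the Revuz identity \eqref{Revuz} converts $\one{s\in\underline{M}}\,\dd s$ into $\hat{\mathtt{a}}^-_{\Theta_s}\,\dd\underline{L}_s$ (with $\underline{\Theta}_s=\Theta_s$ there); on each excursion interval $(g,d)\in\underline{G}\times\underline{D}$, the path agrees with the generic excursion $(\epsilon^{(g)},\Theta^{\epsilon^{(g)}})$ and $\underline{\Theta}_s=\Theta_g$. Inserting these identifications and applying the exit system formula \eqref{eq:last exit} under $\hat{\mathbf{P}}$ gives
\[
A(t)=\hat{\mathbf{E}}_{0,\pi}\!\left[\int_0^t \dd\underline{L}_s\, h(\Theta_s)\!\left\{\hat{\mathtt{a}}^-_{\Theta_s} f(\Theta_s) g(0) + \hat{\mathtt n}^-_{\Theta_s}\!\left(\int_0^{\zeta\wedge(t-s)} f(\Theta^\epsilon_r) g(\epsilon_r)\,\dd r\right)\right\}\right].
\]
Dividing by $t$ and letting $t\to\infty$, the truncation $\zeta\wedge(t-s)$ increases pointwise to $\zeta$, and the Palm identification from \eqref{def:pi+} combined with the mean-cycle formula in Assumption~\ref{ass5} produces $\hat\mu^-$ as the Revuz measure of $\underline{L}$ under $\hat{\mathbf{P}}_{0,\pi}$. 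The resulting limit is the LHS of \eqref{occupation_identity}.

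Second, for each $s\in[0,t]$ I would invoke the duality identity \eqref{lem:easy switch}, evaluating the functional $(\xi_r,\Theta_r)_{r\leq s}\mapsto f(\Theta_s)g(\xi_s-\underline{\xi}_s)h(\underline{\Theta}_s)$ on the reversed path $(\xi_{(s-r)-}-\xi_s,\Theta_{(s-r)-})_{r\leq s}$. Using quasi-left continuity (Assumption~\ref{ass1}) to identify $\xi_{\cdot-}$ with $\xi_\cdot$ at the relevant times, this yields
\[
\hat{\mathbf{E}}_{0,\pi}\!\left[f(\Theta_s) g(\xi_s-\underline{\xi}_s) h(\underline{\Theta}_s)\right] = \mathbf{E}_{0,\pi}\!\left[f(\Theta_0)\, g(-\underline{\xi}_s)\, h(\Theta_{u^*(s)-})\right],
\]
where $u^*(s):=\inf\{u\in[0,s]:\xi_u=\underline{\xi}_s\}$. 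By Assumption~\ref{ass4}, as $s\to\infty$ one has $-\underline{\xi}_s\uparrow -\underline{\xi}_\infty$ and $u^*(s)\to g_\infty$ almost surely, where downward regularity (Assumption~\ref{ass3}) is used to argue that the global minimum is attained at an essentially unique time so that $u^*(\infty)=g_\infty$ and, via quasi-left continuity, $\Theta_{u^*(s)-}\to\underline{\Theta}_\infty$. Integrating in $s$, dividing by $t$, and applying bounded convergence produces
\[
\lim_{t\to\infty}\frac{A(t)}{t}=\mathbf{E}_{0,\pi}\!\left[f(\Theta_0)\,g(-\underline{\xi}_\infty)\,h(\underline{\Theta}_\infty)\right],
\]
which equals the RHS of \eqref{occupation_identity} by the identification \eqref{MAPPOCR} of $\mathtt{U}^-_\nu$ with the joint law of $(-\underline{\xi}_\infty,\underline{\Theta}_\infty)$ under $\mathbf{P}_{0,\nu}$.

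The most delicate technical step, I expect, is the Palm--Revuz identification of the Revuz measure of $\underline{L}$ under $\hat{\mathbf{P}}_{0,\pi}$ with $\hat\mu^-$: the initial law $\pi\otimes\delta_0$ of $(\Theta_0,U_0)$ is not the joint stationary law of the Markov triple $(\Theta,\xi,U)$, so one must rely on the Cesaro average (rather than pointwise convergence in $t$) to bypass this gap, and Assumption~\ref{ass5} is essential both for the mean-cycle formula $c_{\hat\pi^-}=\hat{\mathtt{a}}^-_{\hat\pi^-}+\hat{\mathtt n}^-_{\hat\pi^-}(\zeta)$ and for the $o(t)$ control of the boundary contributions (the excursion truncated at $t$ in the exit-system computation, and the $u$ near $t$ in the time-reversed one).
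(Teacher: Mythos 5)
Your proposal is structurally parallel to the paper's proof (exit system decomposition for one side, time reversal via \eqref{prop:equal dist} for the other, with Assumption \ref{ass5} controlling integrability), but the technical engine you use is genuinely different. The paper fixes the test functional at the endpoint $t$ and computes the \emph{pointwise} limit $\lim_{t\to\infty}\hat{\bE}_{0,\pi}[f(\Theta_t)g(\xi_t-\underline\xi_t)h(\underline\Theta_t)]$, splitting on $\{\underline{g}_t<t\}$ versus $\{\underline{g}_t=t\}$; the first piece is handled by the Markov Renewal Theorem applied to the ladder MAP $(\underline{L}^{-1},\Theta^-)$, and the second by invariance of $\pi$ together with the Kaspi identity \eqref{kaspi-inv} to recover the term $g(0)\hat{\mathtt{a}}^-_\theta$. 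You instead average in time and compute a Cesàro limit of $\frac{1}{t}\int_0^t\hat\bE_{0,\pi}[\cdot]\,\dd s$. A nice feature of your route is that the Revuz identity \eqref{Revuz} hands you the $\hat{\mathtt{a}}^-_\theta f(\theta) g(0)$ contribution directly from the set $\{\xi_s=\underline\xi_s\}$, bypassing the Kaspi-formula cancellation that the paper has to carry out.

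However, the Palm/Cesàro step on the left-hand side is not as self-contained as your write-up suggests, and this is precisely where the paper's MRT does the work you are implicitly borrowing. You need
\[
\lim_{t\to\infty}\frac{1}{t}\,\hat{\bE}_{0,\pi}\!\left[\int_0^t J(\Theta_s)\,\dd\underline{L}_s\right]=\int_{\s}J(\theta)\,\hat\mu^-(\dd\theta)
\]
for bounded $J$, plus an $o(t)$ control of the truncation error from $\zeta\wedge(t-s)$. But \eqref{def:pi+} defines $\hat\pi^-$ via $\hat{\bE}_{0,\pi}[\int_0^1\one{\Theta_s\in\cdot}\dd\underline{L}_s]$ with the specific (non-stationary) initial law $U_0=0$; this is \emph{not} by itself the long-run Cesàro rate, because the triple $(\Theta,\xi,U)$ is not started from a stationary law even though $\Theta$ alone is. To identify the time average with $\hat\mu^-$ you have to time-change by $\underline{L}^{-1}$ and invoke a renewal--reward or ergodic-averaging statement for the ladder MAP $(\underline{L}^{-1},\Theta^-)$, i.e., essentially the same Markov Renewal Theorem machinery (plus the integrability from Assumption \ref{ass5}) that the paper applies pointwise. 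So the gap is not fatal, but it is real: you have relocated the MRT-type argument rather than eliminated it, and the claim that \eqref{def:pi+} already delivers the Cesàro rate should be replaced by an explicit appeal to renewal--reward for $(\underline{L}^{-1},\Theta^-)$ together with $\underline{L}_t/t\to 1/c_{\hat\pi^-}$. The right-hand side of your argument (per-$s$ time reversal, then monotone/bounded convergence as $s\to\infty$ using transience from Assumption \ref{ass4}) matches the paper's and is fine.
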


\begin{proof}
We use last exit decomposition and duality as a starting point of our calculation, and then we take limits. Indeed, for any real valued bounded measurable function $H$ defined on $\s \times [0,\infty) \times \s \times [0,\infty)^2 \times \s \times [0,\infty)$ we have
\begin{align*}
	\bE_{0,\pi}&\left[H(\Theta_t, \ubar{g}_t, \Theta_0, -\ubar{\xi}_t,t - \ubar{g}_t, \minmod{t}, \ordref{t}) \one{\ubar{g}_t<t} \right]
	\\
	&= \bE_{0,\pi} \left[ \int_0^t d\ubar{L}_s {\mathtt n}^-_{\Theta_s} \left( H(\Theta^\epsilon_{t-s},s, \Theta_0, \xi_s, t-s, \Theta_s, \epsilon_{t-s}) \one{t-s < \zeta}\right) \right]
	\\
	&= \int_{\s} \int_0^t \int_{[0,\infty) \times \s} \pi(\dd\nu){U}_\nu^-(\dd s, \dd y, \dd\theta) {\mathtt n}^-_{\theta} \left( H(\Theta^\epsilon_{t-s},s, \nu, \xi_s, t-s, \theta, \epsilon_{t-s}) \one{t-s < \zeta} \right).
\end{align*}
In particular, this means that for some non-negative and uniformly bounded functions   $G $ and $  K$ on $ [0,\infty)$ and $ \s,$ respectively, we have
\begin{align}
		&\bE_{0,\pi}\left[f(\Theta_{0})g(-\ubar{\xi}_t) h( \ubar{\Theta}_{t})G(\ordref{t})K(\Theta_t)\one{\ubar{g}_t<t}\right]
		\notag\\
		&=\int_{\s}\pi(\dd\nu)\int^{t}_0\int_{[0,\infty)\times \s}{U}^{-}_{\nu}(\dd s,\dd y,\dd \theta)f(\nu)g(y)h( \theta) {\mathtt n}^-_\theta\left(G\left(\epsilon_{t-s}\right)K(\Theta^{\epsilon}_{t-s})\one{t-s < \zeta}\right).
	\label{mainvariables}
\end{align}

Taking a version of \eqref{mainvariables} with fewer variables, renaming the functions  $K$ by $f$ and $G$ by $g$, and the role of $\mathbf{P}_{0,\pi}$ replaced by $\hat{\mathbf{P}}_{0,\pi}$, we have
\begin{align}
		&\hat{\bE}_{0,\pi}\left[f(\Theta_{t})g(\ordref{t}) h( \ubar{\Theta}_{t})\one{\ubar{g}_t<t}\right]
		\notag\\
		&=\int_{\s}\pi(\dd\nu)\int^{t}_0\int_{[0,\infty)\times \s}\hat{U}^{-}_{\nu}(\dd s,\dd y,\dd \theta) h(\theta)\hat{\mathtt n}^-_\theta\left(g\left(\epsilon_{t-s}\right)f(\Theta^{\epsilon}_{t-s})\one{t-s < \zeta}\right).
	\label{dual_mainvariables}
\end{align}

Next, we recall that, under Assumption \ref{ass4}, the dual process drifts to $-\infty$, upon which  an application of the Markov Renewal Theorem to the MAP given by $(\ubar{L}^{-1}, \Theta^-),$ cf, \cite{AlsmeyerTheMR}, reveals that
\begin{align}
		&\lim_{t\to\infty}\hat{\bE}_{0,\pi}\left[f(\Theta_{t})g(\ordref{t}) h( \ubar{\Theta}_{t})\one{\ubar{g}_t<t}\right] =
		\int_{\s}  \hat{\mu}^{-}(\dd\theta) h(\theta)\hat{\mathtt n}^-_{\theta}\left(\int^{\zeta}_{0}\dd s f(\Theta_{s}^\epsilon)g(\epsilon_{s}) \right).
		\label{MRTapplied}
\end{align}

\smallskip

To justify the use of the Markov Renewal Theorem it suffices to show that 
\begin{equation}
	\int_\s \sum_n \sup_{ \Delta n \leq t \leq \Delta ( n+1)} \hat{\mathtt n}^-_{\theta}\left( f(\Theta_{t}^\epsilon)g(\epsilon_{t})
	 \one{t<\zeta}\right)  \hat{\pi}^- (\dd \theta) < \infty,
\end{equation}
for some $\Delta>0$. 
To this end, we can bound $K$ and $G$ by unity without loss of generality, in which case, 
\begin{align}
	\int_\s \sum_n \sup_{ \Delta n \leq t \leq \Delta ( n+1)} \hat{\mathtt n}^-_{\theta}\left( f(\Theta_{t}^\epsilon)g(\epsilon_{t})
	{\one{t<\zeta}}\right)  \hat{\pi}^- (\dd \theta) 
	&\leq \int_\s    \sum_n  \hat{\mathtt n}_\theta^- \left(\Delta n <\zeta \right) \hat{\pi}^- (\dd \theta) 
	\notag\\
	&\leq \int_\s    \sum_n \frac{1}{\Delta} \int_{\Delta (n-1)}^{\Delta n} \hat{\mathtt n}_\theta^- \left(\ell <\zeta  \right)\dd \ell\,
	 \hat{\pi}^- (\dd \theta) 
	\notag\\
	&= \frac{1}{\Delta} \hat{\mathtt n}_{\hat{\pi}^-}^- \left(\zeta \right), 
	\label{MRTcondition}
\end{align}
which is finite by Assumption \ref{ass5}.

\smallskip
Next consider the asymptotic probability on the event that $\{t=\underline{g}_t\}$. We claim that the following holds true. 
\begin{align}
	&\lim_{t \to \infty}\hat{\bE}_{0,\pi}\left[f(\Theta_{t})g(\ordref{t}) h(\ubar{\Theta}_{t})\one{t=\underline{g}_t}\right]
	= 
	\int_\s f(\theta)g(0)h(\theta)\hat{\mathtt{a}}^-_\theta \hat{\mu}^-(\dd \theta).
	\label{integratetoprove}
\end{align}
Indeed, because $\pi$ is an invariant probability measure for $\Theta$, and by the limit in \eqref{MRTapplied}, we have
\begin{equation*}
\begin{split}
	&\hat{\bE}_{0,\pi}\left[f(\Theta_{t})g(\ordref{t}) h(\ubar{\Theta}_{t})\one{t=\underline{g}_t}\right]
	\\
	&=\hat{\bE}_{0,\pi}\left[f(\Theta_{t})g(0) h(\Theta_{t})\right]-\hat{\bE}_{0,\pi}\left[f(\Theta_{t})g(0) h(\Theta_{t})\one{\underline{g}_t<t}\right]\\
	&=g(0)\int_{\s}\pi(\dd\theta)f(\theta)h(\theta)-\hat{\bE}_{0,\pi}\left[f(\Theta_{t})g(0) h(\Theta_{t})\one{\underline{g}_t<t}\right]\\
&\xrightarrow[t\to\infty]{}g(0)\int_{\s}\pi(\dd\theta)f(\theta)h(\theta)-g(0)\int_{\s}  \hat{\mu}^{-}(\dd\theta) \hat{\mathtt n}^-_{\theta}\left(\int^{\zeta}_{0}\dd s f(\Theta_{s}^\epsilon)h(\Theta_{s}^\epsilon) \right)
\end{split}
\end{equation*}
To finish, we plug in the identity in \eqref{kaspi-inv} to conclude that 
\begin{equation*}
\begin{split}
&g(0)\left(\int_{\s}\pi(\dd\theta)f(\theta)h(\theta)-\int_{\s}  \hat{\mu}^{-}(\dd\theta) \hat{\mathtt n}^-_{\theta}\left(\int^{\zeta}_{0}\dd s f(\Theta_{s}^\epsilon)h(\Theta_{s}^\epsilon) \right)\right)\\
&=\frac{g(0)}{c_{\pi^{-}}}\int_{\s}\hat{\pi}^{-}({\rm d}\theta)\hat{\mathtt{a}}^{-}_{\theta}f(\theta)h(\theta)\\
&=g(0)\int_{\s}\hat{\mu}^{-}({\rm d}\theta)\hat{\mathtt{a}}^{-}_{\theta}f(\theta)h(\theta).
\end{split}
\end{equation*}
From the duality relationship of Lemma \ref{prop:equal dist}, we may then observe that 
\begin{align}
	&\lim_{t \to \infty}\hat{\bE}_{0,\pi}\left[f(\Theta_{t})g(\ordref{t}) h(\ubar{\Theta}_{t})\right]
	= \lim_{t \to \infty} \bE_{0,\pi} \left[ f(\Theta_{0})g(-\ubar{\xi}_t)h( \ubar{\Theta}_{t}) \right].
	\label{examine}
\end{align}
Accordingly, let us examine the limit on the right-hand side of \eqref{examine}.
{For, we go back to \eqref{mainvariables}, we set $G \equiv K \equiv 1$ there, and take limit as $t \to \infty$, recalling that, because of Assumption \ref{ass4}, ${U}^{-}_{\nu}(\dd y,\dd\theta)$ is a finite measure, and hence
\begin{align}
\lim_{t\to\infty}\bE_{0,\pi}\left[f(\Theta_{0})g(-\ubar{\xi}_t) h( \ubar{\Theta}_{t}) \right]
&=\int_{\s}\pi(\dd\nu)f(\nu)\int_{[0,\infty)\times \s}\bP_{0,\nu} ( -\ubar{\xi}_\infty\in \dd y, \, \ubar{\Theta}_\infty \in \dd \theta)g(y)h(\theta)
\notag\\
	&	=\int_{\s}\pi(\dd\nu)f(\nu)\int_{[0,\infty)\times \s}{U}^{-}_{\nu}(\dd y,\dd\theta)\mathtt{n}^-_\theta(\zeta=\infty)h(\theta)g(y)\notag\\
	&	=\int_{\s}\pi(\dd\nu)f(\nu)\int_{[0,\infty)\times \s}\mathtt{U}^{-}_{\nu}(\dd y,\dd\theta)h(\theta)g(y),
\label{otherdirection}
\end{align}
where we have used the definition in \eqref{MAPPOCR}. By comparing the two limits as given in  \eqref{MRTapplied} and \eqref{otherdirection}, we get the statement of the 
theorem. } 
\end{proof}

\begin{corollary} It is worth recording from the proof of part (i) of Theorem \ref{mehartheorem} that, for bounded measurable functions $f, h:\mathcal{S}\rightarrow \R$,  and $g:[0,\infty)\rightarrow \R$,
\begin{align*}
\bE_{0,\pi} \left[f(\Theta_0)g( -\ubar{\xi}_\infty)h(\ubar{\Theta}_\infty)\right]=\int_{\s}\pi(\dd\nu)f(\nu)\int_{[0,\infty)\times \s} \mathtt{U}_\nu^-(\dd y,\dd\theta)g(y)h( \theta)
    \end{align*}
\end{corollary}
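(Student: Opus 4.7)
The corollary is essentially the marginal form of the joint last-exit identity \eqref{MAPPOCR} that was already established earlier in the paper, and it also coincides with display \eqref{otherdirection} extracted from the proof of Theorem \ref{mehartheorem}. The plan, therefore, is not to redo any fluctuation calculation but simply to marginalise.

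First, I would condition on the initial value of the modulator. Since $\bP_{0,\pi}(\Theta_0\in \dd\nu) = \pi(\dd\nu)$ and $\xi_0=0$ under $\bP_{0,\pi}$, while under $\bP_{0,\nu}$ the starting point is deterministically $(\xi_0,\Theta_0)=(0,\nu)$, disintegration yields
\begin{equation*}
\bE_{0,\pi}\!\left[f(\Theta_0)g(-\ubar{\xi}_\infty)h(\ubar{\Theta}_\infty)\right]
=\int_{\s}\pi(\dd\nu)\,f(\nu)\,\bE_{0,\nu}\!\left[g(-\ubar{\xi}_\infty)h(\ubar{\Theta}_\infty)\right].
\end{equation*}
Assumption \ref{ass4} guarantees that $-\ubar{\xi}_\infty<\infty$ $\bP_{0,\nu}$-almost surely and that the last infimum time $g_\infty$ is finite, so the random variables $-\ubar{\xi}_\infty$ and $\ubar{\Theta}_\infty=\Theta_{g_\infty}$ are well defined; the boundedness of $f,g,h$ ensures the expectations exist and Fubini applies freely.

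Next I would invoke the identity \eqref{MAPPOCR}, which gives the joint law of the triple $(g_\infty,-\ubar{\xi}_\infty,\ubar{\Theta}_\infty)$ under $\bP_{0,\nu}$ as $\mathtt{U}^-_\nu(\dd r,\dd y,\dd\theta)$. Marginalising over the time coordinate $r$ and using the notation of \eqref{time-marginals} produces
\begin{equation*}
\bP_{0,\nu}\!\left(-\ubar{\xi}_\infty\in \dd y,\,\ubar{\Theta}_\infty\in \dd\theta\right)
= \mathtt{U}^-_\nu(\dd y,\dd\theta).
\end{equation*}
Substituting this into the inner expectation in the previous display gives
\begin{equation*}
\bE_{0,\nu}\!\left[g(-\ubar{\xi}_\infty)h(\ubar{\Theta}_\infty)\right]
=\int_{[0,\infty)\times\s} g(y)\,h(\theta)\,\mathtt{U}^-_\nu(\dd y,\dd\theta),
\end{equation*}
and combining with the first step finishes the proof.

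There is essentially no obstacle: all the serious work (the last-exit/excursion-theoretic identification of the joint distribution of the point of closest reach in terms of the descending ladder potential and the mass $\mathtt n^-_\varphi(\zeta=\infty)$) was already carried out in establishing \eqref{MAPPOCR} from the Maisonneuve last exit formula \eqref{eq:last exit}. The corollary reduces to a one-line marginalisation plus a conditioning on $\Theta_0$.
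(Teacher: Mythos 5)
Your proof is correct, and it reaches the identity by a slightly more economical route than the paper takes. The paper presents the corollary explicitly as a by-product of the proof of Theorem~\ref{mehartheorem}, namely the display \eqref{otherdirection}: there one passes from the time-$t$ quantity $\bE_{0,\pi}\left[f(\Theta_0)g(-\ubar{\xi}_t)h(\ubar{\Theta}_t)\right]$ to its limit as $t\to\infty$ (invoking Assumption~\ref{ass4} to make $\ubar{\xi}_\infty$ and $\ubar{\Theta}_\infty$ finite and to argue the finiteness of $U^-_\nu$), and then identifies the limit with the $\mathtt{U}^-_\nu$-integral via \eqref{MAPPOCR}. You instead condition on $\Theta_0$ from the outset, so that
\begin{equation*}
\bE_{0,\pi}\!\left[f(\Theta_0)g(-\ubar{\xi}_\infty)h(\ubar{\Theta}_\infty)\right]=\int_{\s}\pi(\dd\nu)\,f(\nu)\,\bE_{0,\nu}\!\left[g(-\ubar{\xi}_\infty)h(\ubar{\Theta}_\infty)\right],
\end{equation*}
and then apply the already-established point-of-closest-reach identity \eqref{MAPPOCR} directly (marginalising out the $g_\infty$-coordinate as in \eqref{time-marginals}). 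Both arguments ultimately rest on the same Maisonneuve last-exit computation encapsulated in \eqref{MAPPOCR}, but yours avoids the detour through \eqref{mainvariables} and the $t\to\infty$ limit entirely; what the paper's phrasing buys is simply that \eqref{otherdirection} was needed for the proof of Theorem~\ref{mehartheorem} anyway, so the corollary comes for free as a line already on the page, whereas your version stands independently of the theorem's proof. Either way, the appeal to \eqref{MAPPOCR}, the disintegration over $\Theta_0\sim\pi$, and the observation that Assumption~\ref{ass4} makes $-\ubar{\xi}_\infty$ and $\ubar{\Theta}_\infty$ well defined, are the complete content; your proof is a faithful and slightly cleaner restatement of it.
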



\begin{remark}
\rm In the special case that our MAP is a L\'evy process, \eqref{occupation_identity} also reveals an identity which is not well stated in existing literature for L\'evy processes. Recall that the dual of $\xi$ in this setting is nothing more than $-\xi$. Hence, 
if $\mathcal{V}^-$ is the ascending ladder potential and $q^-$ is the rate at which excursions from the maximum are killed, moreover, if $n^+$ is the measure for excursions from the maximum, and $\delta^+$ is the drift of the inverse local time at the maximum, then we get
\begin{equation}
\frac{1}{\delta^++n^+(\zeta)} \left\{\delta^+  g(0)+ n^+\left(\int^{\zeta}_{0}
 g(\epsilon_{s}) \dd s \right)\right\} = q^- \int_{[0,\infty)}\mathcal{V}^-(\dd y)g(y).
 \label{Levytidy}
\end{equation}

Suppose we write $\Phi^+$ (resp. $\Phi^-$) for the Laplace exponent of the inverse local time at the maximum (resp. minimum).
It is well known from the Wiener--Hopf factorisation for L\'evy processes that $\lambda = \Phi^+(\lambda)\Phi^-(\lambda)$ for all $\lambda\geq0$. Since we are under the assumption that $\lim_{t\to\infty}\xi_t = \infty$, $\Phi^-(0) = q^- >0$ but $\Phi^+(0) = 0$. As a consequence, appealing to the Bernstein formula for Laplace exponents of subordinators,  
\[
1= \lim_{\lambda\to0}\frac{\Phi^+(\lambda)}{\lambda} \Phi^-(0) =( \delta^+ +n^+(\zeta) )q^-.
\]
As a consequence, \eqref{Levytidy} tidies up  to give us the well known identity (see e.g. \cite{silverstein} and Exercise VI.6.5 of \cite{bertoin1998cambridge})
\[
\delta^+  g(0)+ n^+\left(\int^{\zeta}_{0}
 g(\epsilon_{s}) \dd s \right) = \int_{[0,\infty)}\mathcal{V}^-(\dd y)g(y).
\]

It is normal in the L\'evy process literature to defined $g$ only as a function on $(0,\infty)$. In effect this is taking $g(0) = 0$, which reduces the identity to 
\[
 n^+\left(\int^{\zeta}_{0}
g(\epsilon_{s}) \dd s \right) = \int_{[0,\infty)}\mathcal{V}^-(\dd y)g(y).
\]

\end{remark}

\begin{remark}\rm
We can use a similar  argument to the classical one for L\'evy processes, given e.g. on pp158-159 of \cite{bertoin1998cambridge}, to  prove that the regularity Assumption \ref{ass3} ensures that $\hat{\mathtt{a}}^-_\theta = 0$ for all $\theta\in \s$ {but for a $\ubar{L}$-negligible set.} 
Indeed, under Assumption \ref{ass3}, using Lemma \ref{prop:equal dist}, we have that, for all $t>0$,
\begin{equation}
0={\bP}_{0,\pi}\left(\tau^-_0>t\right)={\bP}_{0,\pi}(\underline\xi_t=0)
=\hat{\bP}_{0,\pi}(\xi_t = \underline\xi_t).
\label{zeroallt}
\end{equation}
On the other hand, from \eqref{Revuz}, we have that
\begin{align*}
0=\int_0^\infty\hat{\bP}_{0,\pi}(\xi_t = \underline\xi_t)\dd t
&=\hat{\bE}_{0,\pi}\left[\int_0^\infty \hat{\mathtt{a}}_{\Theta_t}^-\dd \underline{L}_t \right]
=\int_{\s}\pi(\dd\theta)\int_{[0,\infty)\times \s}\hat{U}^-_{\theta} (\dd r, [0,\infty), \dd \varphi)\hat{\mathtt{a}}_{\varphi}^-,
\end{align*}
where the left-hand side uses \eqref{zeroallt}.
Since $\mathtt{a}_{\varphi}^-\geq 0$, for all $\varphi\in\s$, it follows that it is equal to zero a.e. with respect to the measure $$\int_{\s}\pi(d\theta)\int_{[0,\infty)\times \s}\hat{U}^-_{\theta} (\dd r, [0,\infty), \dd \varphi),$$ {which implies the claim.} 

\end{remark}

\subsection{Invariance  of $\mathtt{h}^\downarrow(x,\theta; \Omega)$}\label{4.2}
An important application of Theorem \ref{mehartheorem} is that the function $\mathtt{h}^\downarrow(r,\theta; \Omega)$ possesses an invariance property. Formally, it is harmonic, as per the following result. 
\begin{theorem} 
	\label{thm:invariance}
	Suppose that $\Omega\subset \mathcal{S}$ is Borel such  that $\sigma_1(\Omega)>0$. Under Assumptions \ref{ass1}, \ref{ass3}, \ref{ass4}, \ref{ass5}, and \ref{ass7} , we have that 
\begin{equation}
\label{omegaortheta}
    \mathtt{h}^\downarrow( y,\varphi; \Omega) = \bE_{y, \varphi} \left[\mathtt{h}^\downarrow(\xi_{\tau_A}, \Theta_{\tau_A}; \Omega) \one{{\tau_A} < \tau_0^-} \right], \qquad  {\rm Leb}\otimes\pi   \text{-a.e.},
\end{equation}
where $\tau^-_0 = \inf\{t>0: \xi_t<0\}$ and $\tau_A = \inf\{t>0 : \xi_t \in A\}$ for any Borel set $A$. 
{Moreover, if we further impose Assumption \ref{ass8} in place of Assumption \ref{ass7}, then  we can replace $\Omega$ by a singleton $\{\theta\}$ in \eqref{omegaortheta}, {with the equality holding $ {\rm Leb}\otimes\pi  \otimes \pi$-almost everywhere}.}
\end{theorem}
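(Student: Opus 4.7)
We aim to prove \eqref{omegaortheta}. Because $\mathtt h^\downarrow(\cdot,\cdot;\Omega)$ is only $\mathrm{Leb}\otimes\pi$-a.e.\ defined as a density, the cleanest route is to verify the equivalent integrated form: for all bounded measurable $f:\s\to\R$ and $g:[0,\infty)\to\R$,
\[
\int_\s \pi(\dd\varphi) f(\varphi) \int_0^\infty g(y)\mathtt h^\downarrow(y,\varphi;\Omega)\dd y = \int_\s \pi(\dd\varphi) f(\varphi) \int_0^\infty g(y) \bE_{y,\varphi}\!\left[\mathtt h^\downarrow(\xi_{\tau_A},\Theta_{\tau_A};\Omega)\one{\tau_A<\tau_0^-}\right]\dd y.
\]
For the LHS, Theorem \ref{mehartheorem} applied with $h=\one_\Omega$ directly yields $\int_\Omega \hat\mu^-(\dd\theta) \hat{\mathtt n}^-_\theta\!\left(\int_0^\zeta f(\Theta^\epsilon_s) g(\epsilon_s)\dd s\right)$, using that $\hat{\mathtt a}^-_\theta = 0$ by Assumption \ref{ass3} and the remark following Theorem \ref{mehartheorem}.

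For the RHS, the plan is to invoke the strong Markov property of the excursion $(\epsilon,\Theta^\epsilon)$ under $\hat{\mathtt n}^-_\theta$ at $\sigma_A := \inf\{s\ge0:\epsilon_s\in A\}$. Since under $\hat{\mathtt n}^-_\theta$ the excursion has the transition semigroup of $(\xi,\Theta)$ under $\hat{\mathbf P}$ killed at $\tau_0^-$, splitting the occupation integral at $\sigma_A$ gives
\[
\hat{\mathtt n}^-_\theta\!\left(\int_0^\zeta fg\,\dd s\right) = \hat{\mathtt n}^-_\theta\!\left(\int_0^{\sigma_A\wedge\zeta} fg\,\dd s\right) + \hat{\mathtt n}^-_\theta\!\left(\one{\sigma_A<\zeta}\hat{\mathbf E}_{\epsilon_{\sigma_A},\Theta^\epsilon_{\sigma_A}}\!\left[\int_0^{\tau_0^-} fg\,\dd s\right]\right),
\]
with the obvious shorthand $fg = f(\Theta^\epsilon_\cdot)g(\epsilon_\cdot)$ or $f(\Theta_\cdot)g(\xi_\cdot)$. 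The post-$\sigma_A$ piece is a killed $\hat{\mathbf P}$-potential evaluated after the excursion's entry into $A$. Invoking weak duality (Assumption \ref{ass2}) together with the time-reversal identity \eqref{prop:equal dist} --- suitably extended to the killed setting --- this expression is transported to a $\mathbf P_{y,\varphi}$-expectation featuring the first-entry distribution of $(\xi,\Theta)$ into $A$ prior to $\tau_0^-$; reapplying Theorem \ref{mehartheorem} in the ``read-off'' direction identifies the resulting weight as $\mathtt h^\downarrow(\xi_{\tau_A},\Theta_{\tau_A};\Omega)$, reproducing the RHS exactly.

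What remains is to show that the pre-$\sigma_A$ contribution to the LHS cancels against the primal-side contribution from $\{\tau_0^-\le\tau_A\}$ in the a.e.\ sense. The key is Assumption \ref{ass3}: via $\hat{\mathtt a}^-_\theta = 0$, on $\{\tau_0^-\le\tau_A\}$ the process exits to strictly negative values at $\tau_0^-$, so that the corresponding density of $-\underline\xi_\infty$ at zero, after integration against $g(y)\,\dd y\, f(\varphi)\pi(\dd\varphi)$, carries no mass. The main technical obstacle will be the careful formulation and application of the duality \eqref{prop:equal dist} for the killed MAPs, intertwining the excursion stopping time $\sigma_A$ on the dual side with the primal stopping time $\tau_A\wedge\tau_0^-$. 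Once the integrated identity is established for arbitrary bounded measurable $f,g$, the Leb$\otimes\pi$-a.e.\ identity \eqref{omegaortheta} follows. For the singleton-$\Omega$ strengthening under Assumption \ref{ass8}, $\one_\Omega$ is replaced by $\one_{\{\vartheta\}}$ read against the reference measure $\Xi(\dd\vartheta)$, and the joint continuity of $(x,\varphi)\mapsto \mathtt u^-_\theta(x,\varphi)$ gives a jointly continuous version of $\mathtt h^\downarrow(\cdot,\cdot;\{\vartheta\})$, upgrading the conclusion to the Leb$\otimes\pi\otimes\pi$-a.e.\ statement.
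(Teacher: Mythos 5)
The proposal diverges materially from the paper's proof and contains a genuine gap that undermines it.

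The paper proves the invariance by introducing the approximating entrance measures $\ell^\Omega_n(\dd x,\dd\theta)=\hat{\mathtt n}^-_{\hat\mu^-_\Omega}\big(\one{T_{D_n}<\zeta};\,\epsilon_{T_{D_n}}\in\dd x,\Theta^\epsilon_{T_{D_n}}\in\dd\theta\big)$, establishing via Assumption~\ref{ass6} the existence of a density $\hat g^\dagger$ of the killed dual potential $\hat G^\dagger$, expressing $\mathtt h^\downarrow(y,\varphi;\Omega)$ as the monotone limit of $\int\ell^\Omega_n\,\hat g^\dagger(\cdot,(y,\varphi))$, and then invoking the Blumenthal--Getoor duality \eqref{BGduality} (Theorem VI.1.16) to swap a killed hitting kernel applied to $\hat g^\dagger$ in one coordinate for the dual hitting kernel applied in the other, before reassembling via the occupation identity. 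Your route replaces this entire machinery with a split of the excursion occupation at $\sigma_A$, a transport ``via weak duality together with the time-reversal identity \eqref{prop:equal dist}, suitably extended to the killed setting,'' and a claimed cancellation of the pre-$\sigma_A$ piece. That is a different strategy; but two of its steps do not hold up.

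First, the cancellation argument is not coherent. Your decomposition reads
\begin{equation*}
\hat{\mathtt n}^-_\theta\Big(\int_0^\zeta fg\,\dd s\Big)=\hat{\mathtt n}^-_\theta\Big(\int_0^{\sigma_A\wedge\zeta} fg\,\dd s\Big)+\hat{\mathtt n}^-_\theta\Big(\one{\sigma_A<\zeta}\,\hat G^\dagger[fg](\epsilon_{\sigma_A},\Theta^\epsilon_{\sigma_A})\Big),
\end{equation*}
and you want the first summand to ``cancel against the primal-side contribution from $\{\tau_0^-\le\tau_A\}$.'' But that event makes no contribution to the right-hand side of \eqref{omegaortheta} --- it is already excised by the indicator $\one{\tau_A<\tau_0^-}$ --- so there is nothing for the pre-$\sigma_A$ term to cancel against; it is simply a nonnegative occupation integral (the excursion's occupation prior to first entering $A$), and it does not vanish for generic $A,f,g$. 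Invoking $\hat{\mathtt a}^-_\theta=0$ controls the boundary contribution $g(0)$ in \eqref{occupation_identity}, not this occupation mass. Without a genuine reason for this term to disappear, your split does not close the argument.

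Second, ``suitably extended to the killed setting'' conceals precisely the technical content that the paper's argument supplies. Equation \eqref{prop:equal dist} is a time-reversal identity for the unkilled MAP on a fixed horizon; to transport a post-$\sigma_A$ killed potential to a primal quantity featuring $(\xi_{\tau_A},\Theta_{\tau_A})$ under $\one{\tau_A<\tau_0^-}$, the paper needs the density $\hat g^\dagger$ (hence Assumption~\ref{ass6}), the approximation $\ell^\Omega_n$ with $T_{D_n}\to0$, an exchange of limits and integrals justified by monotonicity, and the Blumenthal--Getoor duality for the killed processes. None of these appear in your sketch, and the time-reversal identity you cite is not, on its own, an adequate substitute for \eqref{BGduality}. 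I would encourage you to look at the paper's use of the measures $\ell^\Omega_n$ together with \eqref{boundednessneeds}--\eqref{limit_g_dagger}: that is where the a.e.\ identification of $\mathtt h^\downarrow$ with a limit of killed-potential integrals is made rigorous, and it is the pivot on which the rest of the proof turns.
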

\begin{proof}

We first define the region $$\mathrm{D}_n : = \{ (x,\theta) \in \R^d \times \Omega : 1/n \leq |x| \leq n \}$$ 
and for each $n$ we construct a measure $\ell^\Omega_n$ defined via the identity
\begin{equation}
	\label{ell_measure}
	\int_{[0,\infty) \times \Omega} \ell^\Omega_n (\dd x,\dd\theta) f(x,\theta) = \hat{\mathtt n}^-_{\hat\mu^-_\Omega}\left( \one{T_{\mathrm{D}_n} < \zeta} f(\epsilon_{T_{\mathrm{D}_n}} , \Theta^\epsilon_{T_{\mathrm{D}_n}}) \right),
\end{equation}
{where $f$ is non-negative and uniformly bounded from above, $\hat\mu^-_\Omega$ denotes the restriction of $ \hat{\mu}^-$ to $\Omega,$ }
$$\hat{\mathtt n}^-_{\hat\mu^-_\Omega}\left(\cdot\right)=\int_{\s}\hat\mu^-(d\varphi)\one{\varphi\in \Omega}\hat{\mathtt n}^-_{\varphi}\left(\cdot\right)$$
and $T_{{\rm D}_n}$ denotes the hitting time of the region ${\rm D}_n.$ {Next, we define the potential or $0$-resolvent for the MAP $(\xi, \Theta)$ under $\hat \bP$,  killed when $\xi$ enters the lower half-line, as}
\begin{equation*}
	\hat{G}^\dagger(x,\theta, \dd y, \dd\varphi) = {\hat{\bE}}_{x,\theta} \left[ \int_0^{\tau^-_0} \one{\xi_s \in \dd y, \Theta_s \in \dd\varphi} \dd t\right].
\end{equation*}
For the aforesaid  killed process, observe that {by the Markov property under $\hat{\mathtt n}^-_{\cdot}$ }
\begin{equation}
	\int_{[0,\infty) \times \Omega \times [0,\infty) \times {\color{amethyst}\mathcal{S}}} \ell^\Omega_n(\dd x, \dd\theta) \hat{G}^\dagger(x,\theta, \dd y, \dd\varphi) f(y,\varphi) = \hat{\mathtt n}^-_{\hat\mu^-_\Omega} \left( \int_{T_{\mathrm{D}_n}}^\zeta \dd s f(\epsilon_{s} , \Theta^\epsilon_{s}) \right).
	\label{boundednessneeds}
\end{equation}
Under Assumption \ref{ass7}, recalling the definition of $\mathtt{h}^\downarrow$ given in \eqref{hdownarrow}, we can now rewrite (\ref{occupation_identity}) as 
\begin{equation}
	\label{occupation_with_h}
	\hat{\mathtt n}^-_{\hat\mu^-_\Omega} \left( \int_{0}^\zeta \dd s f(\epsilon_{{s}} , \Theta^\epsilon_{s}) \right) = \int_\s \pi(\dd\nu)\int_0^\infty \dd y \mathtt{h}^\downarrow( y,\nu; \Omega) f(y, \nu).
\end{equation}
{Notice that by construction, as $n\to\infty,$ $T_{D_n}\rightarrow 0$ a.s.  By the monotone convergence theorem, we can assemble the pieces together, to get} 
\begin{align}
	\lim_{n \to \infty} &\int_{[0,\infty) \times \Omega \times [0,\infty) \times {\color{amethyst}\mathcal{S}}} \ell^\Omega_n(\dd x, \dd\theta) \hat{G}^\dagger(x,\theta, \dd y, \dd\varphi)f(y,\varphi)
	\notag\\
	&= \int_{[0, \infty) \times \s}  \dd y \pi(\dd\varphi) \mathtt{h}^\downarrow(y,\varphi; \Omega) f(y,\varphi),
	\label{getdensity}
\end{align}
where the limit occurs in an increasing fashion. {Next we claim that Assumption \ref{ass6} ensures that there exists a 
$\hat{g}^\dagger:\mathbb{R}\times\mathcal{S}\times\mathbb{R}\times\mathcal{S}\to \mathbb{R}^{+}$ measurable, such that
\begin{equation*}
	\hat{G}^\dagger(x,\theta, \dd y, \dd\varphi) = \hat{g}^\dagger( (x,\theta), (y, \varphi)) \dd y \pi(\dd\varphi).
\end{equation*}
Indeed, decomposing the resolvent $\hat G$ into its pre and post killing parts, we find
\begin{align*}
	\hat{G}(x,\theta,\dd z, \dd\beta) = \hat{G}^\dagger(x,\theta, \dd z, \dd\beta)  + \int_{[0,\infty) \times \s} \bP_{x,\theta}((\xi_{\tau^-_0  }, \Theta_{\tau^-_0 }) \in (\dd y, \dd\varphi)) \hat G(y,\varphi, \dd z, \dd\beta),
\end{align*}
where $\tau^-_0  = \inf\{t>0: \xi_t<0\}$. In turn, this gives us a domination of the measures $$\hat{G}(x,\theta,\dd z, \dd\beta)\geq \hat{G}^\dagger(x,\theta, \dd z, \dd\beta),$$ which is enough to ensure the absolute continuity of the measure $\hat{G}^\dagger.$ }

\smallskip

Returning to \eqref{getdensity}, we now have that the following limit holds {in the monotone increasing sense:}
\begin{equation}
	\label{limit_g_dagger}
	\lim_{n \to \infty} \int_{[0,\infty) \times \s} \ell^\Omega_n (\dd x, \dd\theta) \hat{g}^\dagger( (x,\theta), (y, \varphi)) =  \mathtt{h}^\downarrow(y,\varphi; \Omega), \qquad {\rm Leb}\otimes\pi  \text{-a.e.}
\end{equation}
In the same vein as \cite{silverstein}, we appeal to Theorem VI.1.16 of Blumenthal and Getoor \cite{BlumenthalGetoor}  to  get
\begin{equation}
\begin{split}
	\label{BGduality}
	&\int_{[0,\infty) \times \s}  \hat{\mathcal{P}}_A ((x, \theta), (\dd z, \dd\beta)) \hat{g}^\dagger((z,\beta), (y,\varphi)) \\
	&= \int_{[0,\infty) \times \s}  {\mathcal{P}}_A ((y,\varphi),(\dd z,\dd\beta)) \hat{g}^\dagger ( (x,\theta),(z,\beta)),
\end{split}	
\end{equation}
where $${\mathcal{P}}_A ((y,\varphi),(\dd z,\dd\beta))=\mathbf{P}_{y,\varphi}(\xi_{\tau_A}\in \dd z, \Theta_{\tau_A}\in \dd\beta),$$  and $\tau_A := \inf \{ t \geq 0 : (\xi_t , \Theta_t) \in A\}.$ Similarly, with $\hat{{\mathcal{P}}}_A$ defined under $\hat{\mathbf{P}}.$ 

Finally, with the aim of showing the harmonic property of $\mathtt{h}^\downarrow$, for some $A \subseteq [0,\infty) \times \s,$ define the hitting time $\tau_A := \inf \{ t \geq 0 : (\xi_t , \Theta_t) \in A\}.$ We compute for non-negative and uniformly bounded $f$,  
\begin{align*}
	&\int_{[0,\infty) \times \s}  \dd y \pi(\dd \varphi) f(y, \varphi) \bE_{y, \varphi} \left[ \mathtt{h}^\downarrow(\xi_{\tau_A}, \Theta_{\tau_A};\Omega) \one{\tau_A < \tau_0^-} \right]
	\\
	&= \int \dd y \pi(\dd \varphi) f(y,\varphi) \bE_{y, \varphi} \left[ \lim_{n\to \infty} \int_{[0,\infty) \times \s}  \ell^\Omega_n(\dd x,\dd \theta) \hat{g}^\dagger( (x,\theta), (\xi_{\tau_A}, \Theta_{\tau_A})) \one{\tau_A < \tau_0^-} \right]
	\\
	&= \lim_{n\to \infty}  \int_{[0,\infty) \times \s}  \ell^\Omega_n(\dd x,\dd \theta) \int_{[0,\infty) \times \s}   \dd y \pi(\dd \varphi) f(y,\varphi) \bE_{y,\varphi} \left[ \hat{g}^\dagger( (x,\theta), (\xi_{\tau_A}, \Theta_{\tau_A})) \one{\tau_A < \tau_0^-}  \right] 
	\\
	&= \lim_{n\to \infty} \int_{[0,\infty) \times \s}  \ell^\Omega_n(\dd x,\dd \theta) \int_{[0,\infty) \times \s}  \dd y \pi(\dd \varphi) f(y, \varphi) \int {\mathcal{P}}_A((y,\varphi), (\dd z, \dd \beta)) \hat{g}^\dagger( (x,\theta),(z,\beta))
	\\
	&= \lim_{n \to \infty} \int_{[0,\infty) \times \s}  \ell^\Omega_n(\dd x,\dd \theta)\int_{[0,\infty) \times \s}  \dd y \pi(\dd \varphi) f(y, \varphi) \int \hat{\mathcal{P}}_A((x,\theta), (\dd z, \dd \beta) ) \hat{g}^\dagger(  (z, \beta), (y,\varphi))
	\\
	&= \lim_{n\to\infty} \int_{[0,\infty) \times \s}  \ell^\Omega_n(\dd x,\dd \theta)\int_{[0,\infty) \times \s}  \dd y \pi(\dd \varphi) f(y, \varphi) \dE_{x,\theta} \left[ \hat{g}^\dagger( (\xi_{\tau_A}, \Theta_{\tau_A}), (y,\varphi) ) \one{{\tau_A} < \tau_0^-} \right]
	\\
	&= \lim_{n\to\infty} \int_{[0,\infty) \times \s}  \ell^\Omega_n(\dd x,\dd \theta) \dE_{x,\theta} \left[ \hat{G}^\dagger[f](  \xi_{\tau_A}, \Theta_{\tau_A} ) \one{{\tau_A} < \tau_0^-} \right]
	\\
	&=  \hat{\mathtt n}^-_{\hat\mu^-_\Omega} \left( \int_{0}^\zeta  f(\epsilon_{t} , \Theta^\epsilon_{t}) \dd t \right)
	\\
	&= \int_{[0,\infty) \times \s} \dd y \pi(\dd \varphi)f(y, \varphi) \mathtt{h}^\downarrow( y,\varphi; \Omega),
\end{align*}
{where we have made use of the monotone limit (\ref{limit_g_dagger}) to exchange limits and integration}, the duality relationship (\ref{BGduality}), and the occupation identity for the excursion measure (\ref{occupation_with_h}).   

\smallskip

We now have that for any bounded and measurable function $f$, 
\begin{equation}
	\mathtt{h}^\downarrow (y,\varphi; \Omega) = \bE_{y, \varphi} \left[ \mathtt{h}^\downarrow(\xi_{\tau_A}, \Theta_{\tau_A}; \Omega) \one{{\tau_A} < \tau_0^-} \right], \qquad {\rm Leb}\otimes\pi  \text{-a.e.}
	\label{harmonicae}
\end{equation}
i.e. $\mathtt{h}^\downarrow $ is a harmonic function. For the case where $\Omega$ is the singleton $\{ \theta \}$, it is not enough to use Assumption \ref{ass7},  so we use instead Assumption \ref{ass8}. As a slight abuse of notation, for some bounded measurable function $\alpha,$ we write 
\begin{equation*}
	\mathtt{h}^\downarrow (y,\varphi; \alpha) = \int_\Omega \mathtt{h}^\downarrow (y,\varphi; \{\theta \}) \alpha(\theta) \pi(\dd \theta).
\end{equation*}

Following the calculations between \eqref{ell_measure} and \eqref{harmonicae}, albeit replacing $\mathtt{h}^\downarrow (y,\varphi; \Omega) $ by  $\mathtt{h}^\downarrow (y,\varphi; \alpha) $, $\hat\mu^-_\Omega(d\theta)$ by $\alpha\hat\mu^-(d\theta):=\alpha(\theta)\hat\mu^-_\Omega(d\theta),$ and  $\ell^\Omega_n(\dd x, \dd\theta) $ by 
\[
\ell^\alpha_n(\dd x, \dd\theta): = \hat{n}^-_{\alpha\hat\mu^-}(\epsilon_{T_{D_n}}\in \dd x, \Theta^\epsilon_{T_{D_n}} \in \dd \theta),
\]
 we 
obtain the analogue of \eqref{harmonicae}, that is 
\[
\mathtt{h}^\downarrow (y,\varphi; \alpha) = \bE_{y, \varphi} \left[ \mathtt{h}^\downarrow(\xi_{\tau_A}, \Theta_{\tau_A}; \alpha) \one{{\tau_A} < \tau_0^-} \right], \qquad {\rm Leb}\otimes\pi  \text{-a.e.}
\]
Since $\alpha$ is any bounded measurable function, it follows that 
\begin{equation*}
	\mathtt{h}^\downarrow (y,\varphi; \{\theta \}) = \bE_{y,\varphi} \left[ 	\mathtt{h}^\downarrow (\xi_{\tau_A}, \Theta_{\tau_A}; \{\theta\})\one{\tau_A < \tau_0^-} \right], \qquad  {\rm Leb}\otimes\pi \otimes\pi   \text{-a.e.,}
\end{equation*}
which concludes the proof.
\end{proof}

\subsection{Proof of Theorem \ref{condition1}}
 In this section, we prove Theorem \ref{condition1} by first showing a  pre-cursor  result. That is to say we prove the analogous result to Theorem \ref{condition1} for the underlying MAP $(\xi,\Theta)$. Thereafter we justify why this is a sufficient result to give the statement of Theorem \ref{condition1}. 
 
 \smallskip

Our objective is thus to  characterise the law  of the MAP $(\xi, \Theta)$ conditioned to continuously hit $\{0\}\times \Omega$, without $\xi$ entering $(-\infty, 0)$, for some Borel $\Omega\subset \mathcal{S}$  satisfying $\sigma_1(\Omega)>0$, where $\sigma_1$ denotes the Lebesgue surface measure on $\mathbb{S}^{d-1}$, or for $\Omega = \{\theta\}$, for some fixed $\theta\in \mathcal{S}$. 

\smallskip

For the case that $\Omega$ satisfies $\sigma_1(\Omega)>0$.  We describe the law of this process via the limiting procedure 
\begin{equation*}
    \bP_{x,\theta}^{\Omega} (A, t < \zeta)  =  \bP_{x,\theta}^{\Omega}(A, t < \tau^{-}_0) :  =\lim_{\varepsilon \to 0} \bP_{x,\theta}(A, t < \tau^{-}_0 | \ubar{\xi}_{\infty} \in [0,\varepsilon), \ubar{\Theta}_\infty \in \Omega),
\end{equation*}
where $A \in \mathcal{G}_t$, where $(\mathcal{G}_t, t\geq 0)$ is the  is the minimal augmented admissible filtration  induced by the MAP $(\xi, \Theta)$ and 
\[
\tau^{-}_{0}= \inf\{t>0: \xi_t <\beta\}.
\]
In the setting that $\Omega = \{\theta\}$, we can adjust the above limit to 
\begin{equation*}
    \bP_{x,\theta}^{\Omega} (A, t < \zeta)  =  \bP_{x,\theta}^{\Omega}(A, t < \tau^{-}_0) :  = \lim_{\varepsilon \to 0} \bP_{x,\theta}(A, t < \tau^{-}_0 | \ubar{\xi}_{\infty} \in (0,\varepsilon), \norm{\ubar{\Theta}_\infty - \theta}<\varepsilon),
\end{equation*}
Under Assumptions \ref{ass1}, \ref{ass3}, \ref{ass4} and \ref{ass7},
we also have
 \begin{equation}
  \left.   \frac{\dd \bP_{x,\theta}^{\Omega}}{\dd \bP_{x,\theta}} \right|_{\mathcal{G}_t\cap\{t<\zeta\}}= \frac{\mathtt{h}^\downarrow(\xi_{t}, \Theta_{t}; \Omega)}{\mathtt{h}^\downarrow(x, \theta; \Omega)}\one{t<\tau^{-}_0}, \qquad t>0, x>0, \theta\in \mathcal{S}.
  \label{lawdownarrow}
 \end{equation} 
 Moreover, if we replace Assumption \ref{ass8} in place of Assumption \ref{ass7}, we have the same conclusion with $\Omega = \{\theta\}$ for $\theta\in\mathcal{S}$.

\smallskip

We first prove the setting that $\Omega$ is such that $\sigma_1(\Omega)>0$. The proof for the second setting is similar. 
	Making use of the Markov property and Fatou's lemma, we see that
	\begin{align*}
		\liminf_{\varepsilon \to 0}  &\bP_{x,\theta}(A, t < \tau^{-}_{0} | \xi_{g_\infty} \in (0,\varepsilon), \Theta_{g_\infty} \in \Omega)
		\\
		&= \liminf_{\varepsilon \to 0} \bE_{x,\theta} \left[ \one{A, t< \tau^{-}_{0} } \frac{\bP_{\xi_t, \Theta_t}(\xi_{g_\infty} \in (0,\varepsilon), \Theta_{g_\infty} \in \Omega)}{\bP_{x,\theta}(\xi_{g_\infty} \in (0,\varepsilon), \Theta_{g_\infty} \in \Omega)} \right]
		\\
		&\geq \bE_{x,\theta} \left[ \one{A, t < \tau^{-}_{0} } \liminf_{\varepsilon \to 0} \frac{\bP_{\xi_t, \Theta_t}(\xi_{g_\infty} \in (0,\varepsilon), \Theta_{g_\infty} \in \Omega)}{\bP_{x,\theta}(\xi_{g_\infty} \in (0,\varepsilon), \Theta_{g_\infty} \in \Omega)}\right]
		\\
		&= \frac{1}{\mathtt{h}^\downarrow(x, \theta ;, \Omega)}\textbf{E}_{x,\theta} \left[ \mathtt{h}^\downarrow(\xi_{t}, \Theta_{t}; \Omega) \one{A, t < \tau^{-}_{0}} \right],
	\end{align*}	
where we have used the fact that, 
\begin{align*}
	&\lim_{\varepsilon \to 0} \frac{\bP_{y,\varphi} ( \xi_{g_\infty} \in (0,\varepsilon), \Theta_{g_\infty} \in \Omega  )}{\bP_{x,\theta} ( \xi_{g_\infty} \in (0,\varepsilon), \Theta_{g_\infty} \in \Omega  )} 
	\\
	&= \lim_{\varepsilon \to 0} \frac{\mathtt{U}^-_{\varphi} (y- (0, \varepsilon), \Omega)}{\mathtt{U}^-_{\theta} ( x-(0, \varepsilon), \Omega)}
	\\
	&= \lim_{\varepsilon \to 0} \frac{\int_{\{ y-(0, \varepsilon) \}\times \Omega} \mathtt{u}_{\varphi}^-(z ,\dd\theta) dz  }{\int_{\{ x-(0, \varepsilon)\} \times \Omega} \mathtt{u}_{\theta}^-(z ,\dd\theta) \dd z  }
	= \lim_{\varepsilon \to 0} \frac{\frac{1}{\varepsilon}\int_{y-\varepsilon}^{y} \int_\Omega \mathtt{u}^-_{\varphi}(z ,\dd\theta) \dd z  }{\frac{1}{\varepsilon}\int_{x-\varepsilon}^{x} \int_\Omega \mathtt{u}^-_{\theta}(z ,\dd\theta) \dd z  }
	= \frac{\int_\Omega \mathtt{u}^-_{\varphi} (y, \dd\theta)  }{\int_\Omega \mathtt{u}^-_{\theta} (x, \dd\theta)  } =\frac{\mathtt{h}^\downarrow(y, \varphi; \Omega)}{\mathtt{h}^\downarrow(x, \theta; \Omega)},
\end{align*}
which is a consequence of Lebesgue's differentiation theorem.
If we change $A$ to $A^c,$ we obtain 
	\begin{align*}
		&\frac{1}{\mathtt{h}^\downarrow(x , \theta ; \Omega)}\textbf{E}_{x,\theta} \left[ \mathtt{h}^\downarrow(\xi_{t}, \Theta_t ; \Omega) \one{A^c, t < \tau^{-}_{0}} \right]
		\\
		&\leq \liminf_{\varepsilon \to 0} \bP_{x,\theta}(A^c, t < \tau^{-}_{0} | \xi_{g_\infty} \in (0,\varepsilon), \Theta_{g_\infty} \in \Omega)
		\\
		&= \liminf_{\varepsilon \to 0} \bP_{x,\theta}(t < \tau^{-}_{0} | \xi_{g_\infty} \in (0,\varepsilon), \Theta_{g_\infty} \in \Omega) \\&\quad- \limsup_{\varepsilon \to 0} \bP_{x,\theta}(A, t < \tau^{-}_{0}| \xi_{g_\infty} \in (0,\varepsilon), \Theta_{g_\infty} \in \Omega)
		\\
		&\leq 1 - \limsup_{\varepsilon \to 0} \bP_{x,\theta}(A, t <\tau^{-}_{0} | \xi_{g_\infty} \in (0,\varepsilon), \Theta_{g_\infty} \in \Omega)
		\\
		&=  \frac{1}{\mathtt{h}^\downarrow(x, \theta ; \Omega)}\textbf{E}_{x,\theta} \left[\mathtt{h}^\downarrow(\xi_{t}, \Theta_{t}; \Omega) \one{ t < \tau^{-}_{0}} \right]  - \limsup_{\varepsilon \to 0} \bP_{x,\theta}(A, t < \tau^{-}_{0} | \xi_{g_\infty} \in (0,\varepsilon), \Theta_{g_\infty} \in \Omega),
	\end{align*}
	which implies that 
		\begin{align*}
		&\limsup_{\varepsilon \to 0} \bP_{x,\theta}(A, t < \tau^{-}_0 | \xi_{g_\infty} \in (0,\varepsilon), \Theta_{g_\infty} \in \Omega)
		\\
		&\leq \frac{1}{\mathtt{h}^\downarrow(x, \theta ; \Omega)} \left(\textbf{E}_{x,\theta} \left[\mathtt{h}^\downarrow(\xi_{t}, \Theta_{t}; \Omega) \one{t < \tau^{-}_0} \right] - \textbf{E}_{x,\theta} \left[ \mathtt{h}^\downarrow(\xi_{t}, \Theta_{t}; \Omega) \one{A^c, t < \tau^{-}_0} \right] \right)
		\\
		&= \frac{1}{\mathtt{h}^\downarrow(x, \theta ; \Omega)}\textbf{E}_{x,\theta} \left[ \mathtt{h}^\downarrow(\xi_{t}, \Theta_{t}; \Omega) \one{A, t < \tau^{-}_0} \right] ,
	\end{align*}
	where we have used the fact that the function $\mathtt{h}^\downarrow$ is invariant from Theorem \ref{thm:invariance}. Combining these inequalities, monotonicity reveals that 
	\begin{equation*}
		\lim_{\varepsilon \to 0}\bP_{x,\theta}(A, t < \zeta | \xi_{g_\infty} \in (0,\varepsilon), \Theta_{g_\infty} \in \Omega) =  \frac{1}{\mathtt{h}^\downarrow(x, \theta ; \Omega)}\textbf{E}_{x,\theta} \left[ \mathtt{h}^\downarrow(\xi_{t}, \Theta_{t}; \Omega) \one{A, t < \tau_0^-} \right],
	\end{equation*}
giving us the change of measure as in the statement of the theorem. 

\smallskip

Finally, we can  now show that the ssMp conditioned to hit the patch $\Omega$ will indeed inherit the change of measure established for the corresponding MAP. As above, we give the proof for $\Omega$ such that $\sigma_1(\Omega)>0$ and leave the proof for the case in which $\Omega$ is a singleton as a straightforward adaptation of the proofs. 

\smallskip

We recall that the change of measure between $\bP^{\Omega}$ and $\bP$ is given by the Doob $h$-transform
 \begin{equation*}
		  \left.   \frac{\dd \bP_{x,\theta}^{\Omega}}{\dd \bP_{x,\theta}} \right|_{\mathcal{G}_t\cap\{t<\zeta\}} = \frac{\mathtt{h}^\downarrow(\xi_{t}, \Theta_{t}; \Omega)}{\mathtt{h}^\downarrow(x, \theta; \Omega)}\one{t < \tau^{-}_0}, 
		  \qquad t,x>0, \theta\in \s.
\end{equation*}
It can be easily verified that for any $t\geq 0,$ the time change $\varphi(t),$ is a stopping time. Theorem 3.4 from \cite{jacod2013limit} then allows us to transfer the martingale property of the Doob $h$-transform for the MAP to the Radon-Nikodym density for the ssMp. Indeed,
\begin{align*}
	 \left. \frac{\dd \P_{x}^{\Omega}}{\dd \P_{x}} \right|_{\mathcal{G}_{t}\cap\{t<\zeta\}}= \left.   \frac{\dd \bP_{\log\norm{x},\arg{x}}^{\Omega}}{\dd \bP_{\log\norm{x},\arg{x}}} \right|_{\mathcal{G}_{\varphi(t)}\cap\{\varphi(t)<\zeta\}} &= \frac{\mathtt{h}^\downarrow(\xi_{\varphi(t)}, \Theta_{\varphi(t)}; \Omega)}{\mathtt{h}^\downarrow(\log\norm{x},\arg{x}; \Omega)}
	  \\
	  &= \frac{\mathtt{h}^\downarrow(\log \norm{X_t}, \arg (X_t); \Omega)}{\mathtt{h}^\downarrow(\log \norm{x}, \arg (x); \Omega)}
	  \\
	  &= \frac{\mathtt{H}^\downarrow(X_t, \Omega)}{\mathtt{H}^\downarrow(x,\Omega)}.
\end{align*}
To complete the 
proof it suffices to note that the more general $\mathbb{P}_x^{\downarrow, (y,\Omega)}$ can be constructed from  $\mathbb{P}_x^{\Omega}$  by scaling, and hence that \eqref{doobdown} holds. \hfill$\square$

\section{Proof of Theorem \ref{Wthrm}}\label{S5}
Similarly to the proof of Theorem \ref{condition1}, because of the Lamperti--Kiu transform \eqref{eq:lamperti_kiu}, it suffices to prove the analogue of Theorem \ref{Wthrm} for the underlying MAP $(\xi, \Theta)$. More precisely,  this means that a pair $(\underline\xi^*, \underline\Theta^*)$ is selected such that $(-\underline\xi^* +y, \underline\Theta^*)$ have probability laws $\mathtt{U}^{-}(x, \varphi),$ $x\geq0$, $\varphi\in \mathcal{S}$. Given the pair $(\underline\xi^*, \underline\Theta^*) = (x^*, \theta^*)$, for $\nu\in\mathcal{S}$, construct two independent processes under $\mathbf{P}^{\downarrow, \theta^*}_{y-x^*, \nu}$ and $\mathbf{P}_{x^*, \theta^*}^{\uparrow}$. 
Then the concatenation of $(\xi+x^*, \Theta)$ under $\mathbf{P}^{\downarrow, \theta^*}_{y-x^*, \theta}$ and $(\xi, \Theta)$ under $\mathbf{P}_{x^*, \theta^*}^{\uparrow}$ under the aforesaid randomisation of the pair $(x^*, \theta^*)$ gives a process equal in law to $(\xi, \Theta)$ under  $\mathbf{P}_{y,\nu}$.  Here, we understand  $\mathbf{P}^{\downarrow, \theta^*}_{y-x^*, \nu}$ to mean the law of the MAP conditioned to continuously approach its minimal value and $\mathbf{P}_{x^*, \theta^*}^{\uparrow}$ to mean the law of the MAP conditioned to stay above $x^*$, when issued from $(x^*, \theta^*)$. Their precise meaning and consistency with  $\mathbb{P}_x^{\downarrow, (\log \norm{x^*} , \{\arg(x^*)\})}$
and $ \mathbb{P}_{x^*}^{\uparrow, (\log\norm{x^*})}$ will be elaborated on below.

\smallskip


Recall  from \eqref{lawdownarrow} that the process conditioned to continuously hit the patch $\Omega: = \{z : \norm{z}=y, \arg(z)\in \Omega\}$ without entering it is defined via the following change of measure with respect to the law of the killed process:
\begin{equation*}
\begin{split}
	&{\bE}_{x,\nu}^{\downarrow, (y, \Omega)}\left(F\left(\xi_s, \Theta_s,  s\leq t\right)\one{ t < \zeta }\right) \\
	&= \frac{1}{\mathtt{h}^\downarrow(x-y, \nu;  \Omega)} 	{\bE}_{x,\nu}\left(F\left(\xi_s, \Theta_s,  s\leq t\right)\mathtt{h}^\downarrow(\xi_t-y, \Theta_t;  \Omega)\one{ t <  \tau_y^- }\right)\\
	&= \frac{1}{\mathtt{h}^\downarrow(x-y, \nu;  \Omega)} 	{\bE}_{x-y,\nu}\left(F\left(\xi_s+y, \Theta_s,  s\leq t\right)\mathtt{h}^\downarrow(\xi_t, \Theta_t;  \Omega)\one{t <  \tau_0^- }\right), \quad x>y, t > 0, \nu \in \s,
\end{split}
\end{equation*}
with semigroup given by
\begin{equation*}
	\mathtt{P}^{\downarrow, (y, \Omega)}_t [f] (x,\nu) = \frac{1}{\mathtt{h}^\downarrow(x-y,\nu; \Omega)} {\bE}_{x,\nu}\left(f\left(\xi_t, \Theta_t\right)\mathtt{h}^\downarrow(\xi_t-y, \Theta_t;  \Omega)\one{t <  \tau_y^- }\right),
\end{equation*}
for any $x>y, t > 0$, $\nu \in \s.$ Recall that under Assumption~\ref{ass8} we can take $\Omega=\{\theta\}$ for any $\theta\in \s,$ and we use the notation ${\bP}_{x,\nu}^{\downarrow, y, \theta}$ instead of ${\bP}_{x,\nu}^{\downarrow, y, \{\theta\}},$ for the probabilities related to this process, similarly for the expectation and semigroup.

 We consider the finite dimensional distributions of the pre-global-minimum  of the path of $(\xi, \Theta)$ jointly  with the law of $(\xi_{g_\infty},\ubar{\Theta}_{g_\infty} )$, where we recall $g_\infty = \sup\{s>0: \underline\xi_s = \underline\xi_\infty\}$. Using that from \eqref{occupation_identity} the law of the pre-minimum is given by $\mathtt{U}_\nu^-(\dd y,\dd\theta)$, then given the pair $(\xi_{g_\infty},\Theta_{g_\infty} )$,  the law of the pre-minimum process possesses the required conditional behaviour. 

 \smallskip
Firstly, reversing time and applying the last exit formula yields, for some {non-negative}, uniformly bounded and measurable functionals $F, G, H,$ and $K:[0,\infty)\to \re,$ $x\in\R,$ $\Omega\subset\s$ measurable
and $t>0$, 
\begin{align*}
&\int_\s \pi(\dd \nu)H(\nu)\bE_{x,\nu} \left[ F \left( (\xi_s - \xi_{g_t}, \Theta_s) : 0 < s< g_t\right) G(\Theta_{g_t}) K(-\xi_{g_t}) \right]\notag\\
	&=\bE_{0,\pi} \left[ F \left( (\xi_s - \xi_{g_t}, \Theta_s) : 0 < s< g_t\right) G(\Theta_{g_t}) K(-\xi_{g_t}-x) H(\Theta_0) \right]
	\\
	&= \hat{\bE}_{0,\pi} \left[ F((\xi_{ t - u} - \xi_{g_t}, \Theta_{ t-u}) :0 < u < t-g_t ) G(\Theta_{g_t}) K(\xi_t-\xi_{g_t}-x)H(\Theta_t)\right]
	\\
	&= \hat{\bE}_{0,\pi} \left[ \sum_{g\in \underline{G}} F((\xi_{t-u} - \xi_{g}, \Theta_{t-u}) : 0< u < t-g) G(\Theta_{g}) K(\xi_t-\xi_{g}-x) H(\Theta_t)\one{ g < t < D_g } \right]
	\\
	&= \hat{\bE}_{0,\pi} \left[ \int_0^t \dd \ubar{L}_s G(\Theta_{s})  \hat{{\mathtt n}}_{\Theta_s}^- \left( F\left((\epsilon_{t-s-u},\Theta_{t-s-u}^\epsilon) : 0<u<t-s\right)K(\epsilon_{t-s}-x)  \one{t-s <\zeta} \right) \right]
	\\
	&= \int_\s\int_0^t \hat{U}_\pi^-(\dd \ell, \dd \theta) G(\theta)\\
	&\hspace{2cm} \times\hat{\mathtt n}_\theta^- \left( F( (\epsilon_{t-\ell-u}, \Theta_{t-\ell-u}^\epsilon): 0 < u < t - \ell) K(\epsilon_{t-\ell}-x)H(\Theta^\epsilon_{t-\ell})   \one{t - \ell <\zeta} \right).
\end{align*}
In what follows, we will work with functionals of the type
\begin{equation}
F((\omega_s, \eta_s) : 0 < s \leq T) = \prod_{i=1}^{n} f_i(\omega_{s_i} , \eta_{s_i}) \one{ s_n <T},
\label{specialF}
\end{equation}
for $0 < s_1 < ... < s_n $ and coordinate processes $(\omega_s, s\geq0)$ in $[0,\infty)$ and $(\eta_s, s\geq0)$ in $\mathcal{S}$ and $T>0$.
 Next, we may use the Markov property  to write the following iterative formula. Indeed, denoting by $\hat{\mathtt{P}}^\dagger=(\hat{\mathtt{P}}^\dagger_{t}, \ t\geq 0)$ the semigroup of the process killed at its first passage time below zero under $ \hat{\bP},$ we have
\begin{align}
	&\int_\s\int_0^t \hat{U}_\pi^-(\dd \ell, \dd \theta) G(\theta)  \hat{\mathtt n}_\theta^- \Big( K(\epsilon_{t-\ell}-x)H(\Theta^\epsilon_{t-\ell}) \prod_{i=1}^{n} f_{i}(\epsilon_{t-\ell-s_{i}}, \Theta_{t-\ell-s_{i}}^\epsilon)  \one{0 \leq s_1 < ... < s_n < t- \ell < \zeta}  \Big)
	\notag\\
	&=\int_\s\int_0^t \hat{U}_\pi^-(\dd \ell, \dd \theta) G(\theta)  \hat{\mathtt n}_\theta^- \Big(f_n(\epsilon_{t-\ell-s_n}, \Theta_{t-\ell-s_n}^\epsilon)\notag\\
	&\qquad\times  \hat{\mathtt{P}}^\dagger_{s_n-s_{n-1}} \left[f_{n-1}...\hat{\mathtt{P}}^\dagger_{s_2-s_{1}} \left[f_{1} \hat{\mathtt{P}}^\dagger_{s_1} [K(\cdot-x)H]  \right] \right] (\epsilon_{t-\ell-s_n}, \Theta_{t-\ell-s_n}^\epsilon)
	 \one{  0<t- \ell-s_n < \zeta} \Big).
	\notag\\
\end{align}
We now make $t\to\infty,$ and apply the Markov Renewal Theorem for MAPs to get that the above expression converges towards
\begin{align}	
	& \xrightarrow[t\to\infty]{}	\hat{\mathtt n}_{ G\hat{\mu}^-}^- \Bigg( \int_{0}^\infty \one{0< u-s_n < \zeta}  f_n(\epsilon_{u - s_n}, \Theta_{t-s_n}^\epsilon)   \notag\\
	&\hspace{3.3cm}\times\hat{\mathtt{P}}^\dagger_{s_n-s_{n-1}} \left[f_{n-1}...\hat{\mathtt{P}}^\dagger_{s_2-s_{1}} \left[f_{1} \hat{\mathtt{P}}^\dagger_{s_1} [K(\cdot-x)H] \right] \right] (\epsilon_{u-s_n}, \Theta_{u-s_n}^\epsilon)\dd u\Bigg)
	\notag\\
	&= \hat{\mathtt n}_{ G\hat{\mu}^-}^- \left( \int_{0}^\infty\one{v<\zeta}  f_n(\epsilon_{v}, \Theta^\epsilon_v) \hat{\mathtt{P}}^\dagger_{s_n-s_{n-1}} \left[f_{n-1}...\hat{\mathtt{P}}^\dagger_{s_2-s_{1}} \left[f_{1} \hat{\mathtt{P}}^\dagger_{s_1} [K(\cdot-x)H] \right]\right] (\epsilon_{v}, \Theta_{v}^\epsilon)\dd v\right).
	\label{verfiyMRTconditionsfirst}
\end{align}
where $ \hat{\mathtt n}_{ G\hat{\mu}^-}^-(\cdot)$ denotes the measure given by 
$\int_\mathcal{S}\hat{\mu}^-(\dd \theta) G(\theta)\hat{\mathtt n}_\theta(\cdot)$. In order to justify the application of the Markov Renewal Theorem above, we need to  verify some conditions, which we now attend to. Define the function $W_\theta$ by 
\begin{align*}
	W_\theta (t)  &=  \hat{\mathtt n}_\theta^- \Big(f_n(\epsilon_{t-s_n}, \Theta_{t-s_n}^\epsilon)\notag
	\\
	&\hspace{1cm}    \times\hat{\mathtt{P}}^\dagger_{s_n-s_{n-1}} \left[f_{n-1}...\hat{\mathtt{P}}^\dagger_{s_2-s_{1}} \left[f_{1} \hat{\mathtt{P}}^\dagger_{s_1} [K(\cdot-x)H]  \right] \right] (\epsilon_{t-s_n}, \Theta_{t-s_n}^\epsilon)
	\one{ 0<t-s_n < \zeta} \Big).
\end{align*}
Using the version of the Markov Renewal  Theorem  as stated in Theorem 1 of \cite{AlsmeyerTheMR}, it is sufficient to show the following integrability condition holds, for any $\Delta>0,$:
\begin{equation}
	\label{integrability_condition}
	\int_\s \sum_n \sup_{ \Delta n \leq l \leq \Delta ( n+1)} G(\theta) W_\theta (l)  \hat{\pi}^- (\dd \theta) < \infty.
\end{equation}
Suppose $0 \leq f_1, ... , f_{n}$ are continuous and bounded by some $M > 0.$ Then
\begin{equation*}
	W_\theta (l) \leq M^n \cdot \hat{\mathtt n}_\theta^- \left( \one{l - s_n < \infty }\right),
\end{equation*} 
and hence, arguing similarly to \eqref{MRTcondition}, we have
\begin{align*}
	\sum_m \sup_{ \Delta m \leq l \leq \Delta (m+1)} W_\theta(t) &\leq M^n  \sum_m \sup_{ \Delta m \leq l \leq \Delta ( m+1)} \hat{\mathtt n}_\theta^- \left( 0 < l-s_n < \zeta \right)
	\\
	&\leq M^n    \sum_m  \hat{\mathtt n}_\theta^- \left(\Delta m - s_n <\zeta \right) 
	\\
	& \leq  M^n    \sum_m \frac{1}{\Delta} \int_{\Delta (m-1)}^{\Delta m} \hat{\mathtt n}_\theta^- \left(\ell - s_n <\zeta  \right)\dd \ell
	\\
	&= \frac{M^n}{\Delta}  \int_{s_n}^\infty \hat{\mathtt n}_\theta^- \left(\ell - s_n <\zeta \right) \dd \ell, 
\end{align*}
which allows us to dominate \eqref{integrability_condition} by 
\begin{equation*}
	\int \hat{\pi}^- (\dd \theta) G(\theta)   \int_0^\infty \hat{\mathtt n}_\theta^- \left(\ell <\zeta \right) \dd \ell  = 
	\hat{\mathtt n}_{G\hat{\pi}^-}(\zeta)\leq \norm{G}_\infty \hat{\mathtt n}_{\hat{\pi}^-}(\zeta)< \infty,
\end{equation*}
thanks to Assumption \ref{ass5}. Returning to \eqref{verfiyMRTconditionsfirst}, we can now employ the occupation identity (\ref{occupation_identity}) to see that 
\begin{align*}
	&\hat{\mathtt n}_{ G\hat{\mu}^-}^- \left( \int_{0}^\infty \dd v f_n  \hat{\mathtt{P}}^\dagger_{s_n-s_{n-1}} \left[f_{n-1}...\hat{\mathtt{P}}^\dagger_{s_2-s_{1}} \left[f_{1} \hat{\mathtt{P}}^\dagger_{s_1} [K(\cdot-x)H] \right]\right] (\epsilon_{v}, \Theta_{v}^\epsilon)\one{v<\zeta}\right)
	\\
	&= \int_{\s \times [0,\infty) \times \s } \pi(\dd \nu)\mathtt{U}^-_\nu(\dd y , \dd \theta) G(\theta) \left( f_n  \hat{\mathtt{P}}^\dagger_{s_n-s_{n-1}} \left[f_{n-1}...\hat{\mathtt{P}}^\dagger_{s_2-s_{1}} \left[f_{1} \hat{\mathtt{P}}^\dagger_{s_1} [K(\cdot-x)H] \right]\right] (y,\nu) \right)
	\\
	&= \int_{\s \times [0,\infty) \times \s }  \pi(\dd \nu)\dd y\Xi(\dd\theta) \mathtt{u}^-_\nu( y , \theta) G(\theta)\left( f_n  \hat{\mathtt{P}}^\dagger_{s_n-s_{n-1}} \left[f_{n-1}...\hat{\mathtt{P}}^\dagger_{s_2-s_{1}} \left[f_{1} \hat{\mathtt{P}}^\dagger_{s_1} [K(\cdot-x)H] \right]\right] (y,\nu) \right)
	\\
	&= \int_{\s \times [0,\infty)\times \s} \pi(\dd\nu) \dd y \Xi(\dd\theta) \mathtt{h}^\downarrow (y ,\nu; \theta)G(\theta)\\
	&\quad\times\left( f_n  \hat{\mathtt{P}}^{\dagger}_{s_n - s_{n-1}} \left[ f_{n-1}\hat{\mathtt{P}}^{\dagger}_{s_{n-1}-s_{n-2}} \left[...f_1 \hat{\mathtt{P}}^{\dagger}_{s_{1}} [K(\cdot-x)H] \right]\right] (y,\nu)\right);\\
\end{align*}
where as before $\mathtt{h}^\downarrow (y ,\nu; G)$ is as defined in \eqref{hdownarrow2}. Finally, using duality and introducing the function $\mathtt{h}^\downarrow$ in a telescopic way, we get 
\begin{equation}\label{needsinterpreting}
\begin{split}
&\int_{\s\times\s \times [0,\infty)} \Xi(\dd\theta)\pi(\dd\nu) \dd y   \mathtt{h}^\downarrow (y ,\nu; \theta)G(\theta)\\
&\quad\times\left( f_n  \hat{\mathtt{P}}^{\dagger}_{s_n - s_{n-1}} \left[ f_{n-1}\hat{\mathtt{P}}^{\dagger}_{s_{n-1}-s_{n-2}} \left[...f_1 \hat{\mathtt{P}}^{\dagger}_{s_{1}} [K(\cdot-x)H] \right]\right] (y,\nu)\right)\notag \\
&= \int_{\s\times\s \times [0,\infty)} \Xi(\dd\theta) \pi(\dd \nu) dy K(y-x)H(\nu) G(\theta) {\mathtt{P}}^{\dagger}_{s_1} \left[ {f}_1 {\mathtt{P}}^{\dagger}_{s_{2}-s_{1}} \left[... {\mathtt{P}}^{\dagger}_{s_n-s_{n-1}} [\mathtt{h}^\downarrow f_n ] \right]\right] (y,\nu)
\notag	\\
&=\int_{\s\times\s \times [x,\infty)}  \Xi(\dd \theta) \pi(\dd \nu) dy K(y-x)H(\nu)G(\theta)\\
&\qquad\times \mathtt{h}^\downarrow (y ,\nu; \theta) {\mathtt{P}}^{\downarrow, (0,\theta)}_{s_1} \left[ {f}_1 {\mathtt{P}}^{\downarrow, (0,\theta)}_{s_{2}-s_{1}} \left[... {\mathtt{P}}^{\downarrow, (0,\theta)}_{s_n-s_{n-1}} [ f_n ] \right]\right] (y,\nu);  \notag \\
\end{split}
\end{equation}
next, we use the the Markov property, the fact that $K$ has support on $[0,\infty)$, and a change of variables to show that the above expression equals
\begin{equation}
\begin{split}
&= \int_{\s\times\s \times [x,\infty)}  \Xi(\dd \nu)\pi(\dd \nu) dy K(y-x)H(\nu)G(\theta) {\mathtt{h}^\downarrow (y ,\nu; \theta)}  \bE_{y,\nu}^{\downarrow, (0,\theta)} \left[  \prod_{i=1}^{n} f_i(\xi_{s_i} , \Theta_{s_i}), s_n<\zeta  \right] \notag\\
&= \int_{\s\times\s \times [x,\infty)} \Xi(\dd \theta) \pi(\dd \nu) dy K(y-x)H(\nu) \\
&\qquad\times \bE_{0,\nu}^{} \left[  \prod_{i=1}^{n} f_i(\xi_{s_i}+y, \Theta_{s_i}) {\mathtt{h}^\downarrow (\xi_{s_n}+y ,\Theta_{s_n}; \theta)}, s_n<\tau^{-}_{-y}  \right] \notag\\
&= \int_{\s \times\s\times [0,\infty)}  \Xi(\dd \theta)\pi(\dd \nu) dz K(z)H(\nu) G(\theta)\\ &\qquad\times \bE_{0,\nu} \left[  \prod_{i=1}^{n} f_i(\xi_{s_i}+z+x, \Theta_{s_i}) {\mathtt{h}^\downarrow (\xi_{s_n}+z+x ,\Theta_{s_n}; \theta)}, s_n<\tau^{-}_{-(z+x)}  \right] ;\notag\\
\end{split}
\end{equation}from this expression, and recalling how  $\bE_{\cdot,\cdot}^{\downarrow, (0,\theta)}$ is constructed, we obtain now that the latter equals 
\begin{equation}
\begin{split}
&= \int_{\s\times\s \times [0,\infty)}  \Xi(\dd \theta) \pi(\dd \nu) dz K(z)H(\nu)G(\theta)  \frac{\mathtt{h}^\downarrow (z+x ,\nu; \theta)}{\mathtt{h}^\downarrow (z+x ,\nu; \theta)}\\
&\qquad\times\bE_{x+z,\nu}^{} \left[  \prod_{i=1}^{n} f_i(\xi_{s_i}, \Theta_{s_i}) {\mathtt{h}^\downarrow (\xi_{s_n},\Theta_{s_n}; G)}, s_n<\tau^{-}_{0}  \right] \notag\\
&=\int_{\s\times\s \times [0,\infty)}   \Xi(\dd \theta)\pi(\dd \nu) dz K(z)H(\nu) {\mathtt{h}^\downarrow (z+x,\nu; \theta)} G(\theta) \bE_{x+z,\nu}^{\downarrow, (0,\theta)} \left[ F((\xi_{s} , \Theta_{s}) : 0 < s <\zeta)  \right]
\notag	\\	
	&= \int_{\s\times\s\times [0,\infty) } \pi(\dd \nu) H(\nu)\bP_{x,\nu} (-\ubar{\xi}_\infty \in \dd z, \ubar{\Theta}_\infty \in \dd \theta)\\
	&\qquad\times G(\theta)K(z) \bE_{x+z,\nu}^{\downarrow, (0, \theta)} \left[ F((\xi_{s} , \Theta_{s}) : 0 < s < \zeta)  \right],		
\end{split}
\end{equation}
where $\zeta$ denotes the lifetime of the process under $\bP^{\downarrow, 0,\theta}$ and we have used \eqref{MAPPOCR}.

\smallskip

It follows that for $\pi$-almost every $\nu\in \s$,
\begin{align}
&\bE_{x,\nu} \left[ F \left( (\xi_s - \xi_{g_\infty}, \Theta_s) : 0 < s< g_{\infty}\right) G(\Theta_{g_\infty}) K(-\xi_{g_\infty}) \right]\notag\\
&=\int_{[0,\infty) \times \s } \bP_{x,\nu} (-\ubar{\xi}_\infty \in \dd y, \ubar{\Theta}_\infty \in \dd \theta) G(\theta)K(y) \bE_{y+x,\nu}^{\downarrow, (0, \theta)} \left[ F((\xi_{s} , \Theta_{s}) : 0 < s < \zeta)  \right].
\label{needsinterpreting21}
\end{align}
{The continuity property in Assumption~\ref{ass8} is enough to ensure  that \eqref{needsinterpreting21} holds for all $\nu\in \s$ and $x\in\R.$}
\smallskip
In conclusión, we have that for any $F$ as defined in \eqref{specialF}, we have
\begin{align}
&\bE_{x,\nu} \left[ F \left( (\xi_s , \Theta_s) : 0 < s< g_{\infty}\right) G(\Theta_{g_\infty}) K(-\xi_{g_\infty}) \right]\notag\\
&=\int_{[0,\infty) \times \s } \bP_{x,\nu} (-\ubar{\xi}_\infty \in \dd z, \ubar{\Theta}_\infty \in \dd \theta) G(\theta)K(z) \bE_{x+z,\nu}^{\downarrow, (0,\theta)} \left[ F((\xi_{s} -z, \Theta_{s}) : 0 < s < \zeta)  \right]\notag\\
&=\int_{[0,\infty) \times \s } \bP_{x,\nu} (-\ubar{\xi}_\infty \in \dd z, \ubar{\Theta}_\infty \in \dd \theta) G(\theta)K(z) \bE_{x,\nu}^{\downarrow, (-z,\theta)} \left[ F((\xi_{s} , \Theta_{s}) : 0 < s < \zeta)  \right].
\label{needsinterpreting2}
\end{align}
Since the functions $F$ as defined in \eqref{specialF} determine the finite dimensional distributions, this is enough to ensure that \eqref{needsinterpreting2} holds for any measurable and bounded functional $F$  of the pre-minimum process. Thus, under $\bE_{x,\nu},$ the conditional law of the pre-minimum process $\left( (\xi_s , \Theta_s) : 0 < s< g_{\infty}\right)$ given $(\Theta_{g_\infty}, \xi_{g_\infty})=(\theta,z),$ equals that of $((\xi_{s} , \Theta_{s}) : 0 < s < \zeta)$ under $\bE_{x,\nu}^{\downarrow, (-z,\theta)}.$

\smallskip

Next, we turn our attention to the post-minimum process, starting with an application of the last exit formula. Let us denote by $\bP^{\uparrow,(w)}_{x,\theta}$ the law of the process conditioned to avoid the sphere of radius $w\geq 0,$ with probability measure  defined by 
 \begin{equation*}
 \begin{split}
 \bE^{\uparrow, (w)}_{x,\theta}\left(F(\xi_s, \Theta_s, s\geq 0)\right)&=\bE_{x,\theta}\left(F(\xi_s, \Theta_s, s\geq 0) | \tau^{-}_{w}=\infty \right)\\
&=\bE_{x-w,\theta}\left(F(\xi_s+w, \Theta_s, s\geq 0) | \tau^{-}_{0}=\infty \right) 
 ,\qquad x>w, \theta\in \s;
 \end{split}
 \end{equation*}
and by 
\begin{equation*}
 \begin{split}
 \bE^{\uparrow, (w)}_{w,\theta}\left(F(\xi_s, \Theta_s, s\geq 0)\right)={\mathtt n}^-_{\theta}\left(F(\xi_s+w, \Theta_s, s\geq 0) | \zeta=\infty \right), \qquad\theta\in \s;
 \end{split}
 \end{equation*}
for any $F$ measurable and bounded functional of $(\xi,\Theta).$ Recall that by assumption~\eqref{ass4} the event $\{\tau^{-}_{w}=\infty\}$ has strictly positive probability under $\bP_{x,\theta},$ for any $x>w, \theta\in \s;$ as well as ${\mathtt n}^-_{\theta}(\zeta=\infty)>0,$ for all $\theta\in \s.$ It follows from the definition that for any given $w\geq0,$ under $\bE^{\uparrow, (w)}$ the coordinate process has the strong Markov property and has transition semigroup given by $$\mathtt{P}^{\uparrow,(w)}_{r}f(x,\theta):=\begin{cases}\bE_{x,\theta}\left(f(\xi_r, \Theta_r) | \tau^{-}_{w}=\infty\right),& x>w,\\
{\mathtt n}^-_{\theta}\left(f(\xi_r+w, \Theta_r) | \zeta=\infty \right),& x=w, \end{cases} \qquad \theta\in\s, r\geq 0; 
$$ where $f:\R\times\s\mapsto \R^{+}$ is any measurable and positive function. Furthermore, for $x>w\geq 0,$ $\bP^{\uparrow, (w)}_{x}$ is obtained as an $h$-transform and we have the following change of measure,
\begin{equation}
	\left.	\frac{\dd \bP^{\uparrow, (w)}_{x,\theta}}{\dd \bP_{x,\theta}}\right|_{\mathcal{F}_t \cap \{t < \zeta\}}  = \frac{\mathtt{h}^\uparrow(\xi_t-w, \Theta_t)}{\mathtt{h}^\uparrow(x-w,\theta)} \one{t < \tau_{w}^-}, \qquad x > w,\theta\in \mathcal{S},
	\label{xithetatoavoidsphere}
\end{equation}
while for $x=w\geq 0,$
\begin{equation}
\begin{split}
	\bE_{w,\theta}^{\uparrow,(w)} \left[ F((\xi_s, \Theta_s), s\leq t), t<\zeta \right]  &= {\mathtt n}^-_\theta \left( \frac{\mathtt{h}^\uparrow(\xi_t, \Theta_t)}{\mathtt{h}^\uparrow(0,\theta)}  F((\xi_s+w, \Theta_s), s\leq t) \one{t<\zeta} \right)\\
	&= \bE_{0,\theta}^{\uparrow,(0)} \left[ F((\xi_s+w, \Theta_s), s\leq t), t<\zeta \right]
	\label{choiceat0}
\end{split}
\end{equation}
where $$\mathtt{h}^{\uparrow}(z,\theta) : = \begin{cases}\bP_{z,\theta}( \tau_0^- = \infty), & z>0, \theta\in \s,\\
{\mathtt n}^-_\theta (\zeta=\infty), & z=0, \theta\in \s.\end{cases}$$  
Observe that under $\bE^{\uparrow,(w)}$ the coordinate process has infinite lifetime a.s. $$\bP_{x,\theta}^{\uparrow,(w)}(\zeta=\infty)=1,\qquad x\geq w, \theta\in\s.$$

Now, recalling that
\[
\mathtt{H}^\uparrow (x) = \mathtt{h}^\uparrow(\log \norm{x}, \arg(x)), \qquad \norm{x}\geq 1,
\]
 the Optimal Stopping Theorem, see e.g. Theorem 3.4 in \cite{jacod2013limit}, allow us to translate \eqref{xithetatoavoidsphere} into \eqref{doobup} and \eqref{choiceat0} into \eqref{doobup0}.

\smallskip
%
%

We will next describe the post-infimum process and relate it to the one with law $\bP^{\uparrow, \cdot}.$ To which end we use the last exit decomposition for non-negative, bounded and measurable $F$, $G$ and $\nu\in \s$, yielding
\begin{align*}
	&\bE_{x,\nu} \left[ F( (\xi_{g_\infty + t} - \xi_{g_\infty}, \Theta_{g_\infty +t}) : t \geq 0) G(-\xi_{g_\infty} , \Theta_{g_\infty}) \right]
	\\
	&= \bE_{0,\nu} \left[ \sum_{g\in \underline{G}} F((\xi_{g + t} - \xi_{g}, \Theta_{g +t}) : t \geq 0) G(-\xi_{g}-x , \Theta_{g}) \one{\zeta_g = \infty}\right]
	\\
	&= \bE_{0,\nu} \left[ \int_0^\infty \dd \ubar{L}_s G(-\xi_{s} -x, \Theta_{s}) {\mathtt n}_{\Theta_s}^- \left( F((\epsilon_t, \Theta_t^\epsilon) : t \geq 0) \one{\zeta = \infty }\right) \right]
	\\
	&=\bE_{0,\nu} \left[ \int_0^\infty \dd t  G(\xi_{s}^- -x, \Theta_{s}^-) {\mathtt n}_{\Theta_s^-}^- \left( F((\epsilon_t, \Theta_t^\epsilon) : t \geq 0) \one{\zeta = \infty }\right) \right]
	\\
	&= \int_{[0,\infty) \times \s} U_\nu^-(\dd y , \dd \theta) G(y-x,\theta) {\mathtt n}_{\theta}^- \left( F((\epsilon_t, \Theta_t^\epsilon) : t \geq 0) \one{\zeta = \infty }\right).
\end{align*}
Using Assumption \ref{ass8}, we may rewrite this in terms of the density $\mathtt{u}^-,$ paying close attention to the normalising factor ${\mathtt n}_\theta^-(\zeta = \infty)$ that it will bring with it. Indeed,
\begin{align*}
	& \int_{[0,\infty) \times \s} U_\nu^-(\dd y , \dd \theta) G(y-x,\theta) {\mathtt n}_{\theta}^- \left( F((\epsilon_t, \Theta_t^\epsilon) : t \geq 0) \one{\zeta = \infty }\right)
	\\
	&= \int_{[0,\infty) \times \s} \Xi(\dd \theta) \dd y \mathtt{u}_\nu^-(y,\theta) G(y-x,\theta) \frac{ {\mathtt n}_{\theta}^- \left( F((\epsilon_t, \Theta_t^\epsilon) : t \geq 0 ) \one{\zeta = \infty } \right)}{{\mathtt n}_\theta^-(\zeta = \infty) }
	\\
	&= \int_{[0,\infty) \times \s} \Xi(\dd \theta) \dd y \mathtt{u}_\nu^-(y,\theta) G(y-x,\theta) \frac{ {\mathtt n}_{\theta}^- \left( F((\epsilon_t, \Theta_t^\epsilon) : t \geq 0 ) \one{\zeta = \infty } \right)}{{\mathtt n}_\theta^-(\zeta = \infty) }.
\end{align*}
{Continuing the computation with the special choice of $F$ given in \eqref{specialF}, we can 
use the Markov property of the transition semigroup $\mathtt{P}^\dagger,$ which yields
\begin{align*}
	& \int_{[0,\infty) \times \s} \Xi(\dd \theta) \dd y \mathtt{u}_\nu^-(y,\theta) G(y-x,\theta) \frac{ {\mathtt n}_{\theta}^- \left( F((\epsilon_t, \Theta_t^\epsilon) : t \geq 0 ) \one{\zeta = \infty } \right)}{{\mathtt n}_\theta^-(\zeta = \infty) } 
	\\
	&= \int_{[0,\infty) \times \s} \Xi(\dd \theta) \dd y \mathtt{u}_\nu^-(y,\theta) G(y-x,\theta) \frac{ {\mathtt n}_{\theta}^- \left( \prod_{i=1}^n f_i(\epsilon_{s_i}, \Theta_{s_i}) \mathtt{h}^\uparrow(\epsilon_{s_n}, \Theta_{s_n}) \right)}{{\mathtt n}_\theta^-(\zeta = \infty) } 
	\\
	&= \int_{[0,\infty) \times \s} \Xi(\dd \theta) \dd y \mathtt{u}_\nu^-(y,\theta) G(y-x,\theta) \frac{ {\mathtt n}_{\theta}^- \left( f_1(\epsilon_{s_1}, \Theta_{s_1}) \mathtt{P}^\dagger_{s_1}\left[f_2 \mathtt{P}^\dagger_{s_2 - s_1} ... \mathtt{P}^\dagger_{s_{n} - s_{n-1}} \left[  f_n\mathtt{h}^\uparrow\right]   \right]\right)}{{\mathtt n}_\theta^-(\zeta = \infty) }
	\\
	&= \int_{[0,\infty) \times \s} \Xi(\dd \theta) \dd y \mathtt{u}_\nu^-(y,\theta) G(y-x,\theta)\notag\\
	&\hspace{3cm} \frac{ {\mathtt n}_{\theta}^- \left( \mathtt{h}^\uparrow(\epsilon_{s_1}, \Theta_{s_1})f_1(\epsilon_{s_1}, \Theta_{s_1}) \mathtt{P}^{\uparrow,(0)}_{s_1}\left[f_2 \mathtt{P}^{\uparrow,(0)}_{s_2 - s_1} ... \mathtt{P}^{\uparrow,(0)}_{s_{n} - s_{n-1}} \left[  f_n\right]   \right](\epsilon_{s_1}, \Theta_{s_1}) \right)}{{\mathtt n}_\theta^-(\zeta = \infty) }\notag\\
	&= \int_{[0,\infty) \times \s} \Xi(\dd \theta) \dd y \mathtt{u}_\nu^-(y,\theta) G(y-x,\theta)\mathtt{P}^{\uparrow,(0)}_{s_1} \left[f_1 \mathtt{P}^{\uparrow,(0)}_{s_2-s_1}\left[f_2 \mathtt{P}^{\uparrow,(0)}_{s_3 - s_1}\left[ ... \mathtt{P}^{\uparrow,(0)}_{s_{n} - s_{n-1}} \left[  f_n\right] \right]    \right] \right](0,\theta)
	\notag\\
	&= \int_{[0,\infty) \times \s} \Xi(\dd \theta) \dd y \mathtt{u}_\nu^-(y,\theta) G(y-x,\theta) \mathbf{E}^{\uparrow,0}_{0, \theta}[F((\xi_s, \Theta_s), s\geq 0)].
\end{align*}
Using \eqref{choiceat0}, we conclude that
\begin{align*}
	&\bE_{x,\nu} \left[ F( (\xi_{g_\infty + t}, \Theta_{g_\infty +t}) : t \geq 0) G(-\xi_{g_\infty} , \Theta_{g_\infty}) \right]\\
	&=\int_{[0,\infty) \times \s } \bP_{x,\nu} (-\ubar{\xi}_\infty \in \dd z, \ubar{\Theta}_\infty \in \dd \theta)G(z,\theta)\mathbf{E}^{\uparrow,(0)}_{0, \theta}[F((\xi_s-z, \Theta_s), s\geq 0)]\\
	&=\int_{[0,\infty) \times \s } \bP_{x,\nu} (-\ubar{\xi}_\infty \in \dd z, \ubar{\Theta}_\infty \in \dd \theta)G(z,\theta)\mathbf{E}^{\uparrow,(-z)}_{-z, \theta}[F((\xi_s, \Theta_s), s\geq 0)].
\end{align*}
The monotone class theorem allows us to extend the above identity, from the finite dimensional distributions to path functionals, and therefrom obtain the result for ssMp. 

To conclude, all that remains is to justify the independence of the pre-infimum and post-infimum given the infimum. However, a further application of the last exit decomposition followed by calculations similar to those performed in the preceeding lines allows us to see that for all bounded and measurable functionals $H,F,$ and $G,$
\begin{align*}
	&\bE_{x,\nu} \left[ H(\xi_s, s\leq g_{\infty})G(-\xi_{g_\infty} , \Theta_{g_\infty})F( (\xi_{g_\infty + t}, \Theta_{g_\infty +t}) : t \geq 0)  \right]\\
	&=\int_{[0,\infty) \times \s } \bP_{x,\nu} (-\ubar{\xi}_\infty \in \dd z, \ubar{\Theta}_\infty \in \dd \theta)\bE_{x,\nu}^{\downarrow, (-z,\theta)} \left[ H((\xi_{s} , \Theta_{s}) : 0 < s < \zeta)  \right]G(z,\theta)\\ 
	&\qquad \times\mathbf{E}^{\uparrow,(-z)}_{-z, \theta}[F((\xi_s, \Theta_s), s\geq 0)],
\end{align*}
from which the conditional independence follows. It is worth noting that alternatively we could have applied Theorem 6 from \cite{millar1978}  to conclude that the post and pre minimum paths are independent, conditionally on the point of closest reach, but we have chosen to provide the proof directly for the sake of completeness. \hfill$\square$

\smallskip


}

\section{Isotropic stable L\'evy processes}\label{Isostable}
A case of immediate interest to us is the isotropic stable L\'evy process. 
Let $X=(X_t, t \geq 0)$ be a $d$-dimensional  stable  L\'evy process $(d\geq 2)$ with probabilities $(\mathbb{P}_x, x \in \mathbb{R}^d)$. This means that $X$ has c\`adl\`ag paths with stationary and independent increments as well as there existing an  $\alpha>0$ such that, for $c>0,$ and $x \in \mathbb{R}^d,$
under $\mathbb{P}_x$,
\[
\text{ the law of } (cX_{c^{-\alpha}t}, t \geq 0) \text{ is equal to  }\mathbb{P}_{cx}. 
\]
The latter is the property of so-called self-similarity.
It turns out that stable L\'evy processes necessarily have  $\alpha\in (0,2]$. The case $\alpha=2$ is that of   standard $d$-dimensional Brownian motion, thus has a continuous path. All other $\alpha\in(0,2)$ have no Gaussian component and are  pure jump processes. In this article we are specifically interested in phenomena that can only occur when jumps are present. We thus restrict ourselves henceforth to the setting $\alpha\in(0,2)$. 

\smallskip

Although Brownian motion is isotropic, this need not be the case in the stable case when $\alpha\in(0,2)$. {\it Nonetheless, we will restrict  to the  isotropic setting.} To be more precise, this means, for all orthogonal transformations $U:\mathbb{R}^d \mapsto \mathbb{R}^d$ and $x \in \mathbb{R}^d,$ 
\[
\quad \textit{the law of} \quad (UX_t, t \geq 0) \textit{ under} \quad \mathbb{P}_x  \textit{ is equal to }  (X_t,t\geq 0) \textit{ under } \mathbb{P}_{Ux}.
\]
For convenience, we will henceforth refer to $X$  as a {\it stable process}. 

\smallskip

As a L\'evy process, our stable   process of index $(0,2)$ has a characteristic triplet $(0,0,\Pi)$, where the jump measure $\Pi$ satisfies 
\begin{equation}
	\Pi(B) = \frac{2^{\alpha} \Gamma((d+\alpha)/2)}{\pi^{d/2} |\Gamma(-\alpha/2)|} \int_B \frac{1}{|y|^{\alpha+d}} \ell_d({\rm d} y), \quad B \subseteq \mathcal{B}(\mathbb{R}^d),
	\label{bigPi}
\end{equation}
where $\ell_d$ is $d$-dimensional Lebesgue measure\footnote{We will distinguish integrals with respect to one-dimensional Lebesgue measure as taking the form $\int\cdot\, \dd x$, where as higher dimensional integrals will always indicate the dimension, for example $\int \cdot\, \ell_d(\dd x)$.}. 
This is equivalent to identifying its characteristic exponent as 
\[
\Psi(\theta)=-\frac{1}{t}\log \mathbb{P} ({\rm e}^{\iu\theta \cdot X_t}) =|\theta|^{\alpha}, \quad \theta \in \mathbb{R}^d,
\]
where we write $\mathbb{P}$ in preference to $\mathbb{P}_0$.

\smallskip

Assumptions \ref{ass1} automatically holds, Assumption \ref{ass2} holds with $\pi$ given by the uniform measure on $\mathbb{S}^{d-1}$, see e.g. \cite{TAMS} Section 7.2, paragraph (a2). That Assumption \ref{ass3} holds is a consequence of the fact that $(\norm{X_t}, t\geq0)$ is a positive self-similar Markov process whose underlying L\'evy process is a Lamperti stable process, which is regular for the upper and lower half lines. Assumption \ref{ass4} also automatically holds on account of the fact that $\lim_{t\to\infty}\norm{X_t}=\infty$.

\smallskip

In Kyprianou et al. \cite{kyprianou2018deep} it was shown that the point of closest reach to the origin, as described in \eqref{x*}, can be described in closed form in Cartesian coordinates. Indeed, 
\begin{equation}
\mathbb{P}_x(X^*\in \dd  y)=\pi^{-d/2}\dfrac{\Gamma\left({d}/{2}\right)^2}{\Gamma\left(({d-\alpha})/{2}\right)\Gamma\left({\alpha}/{2}\right)}
\, \frac{(\norm{x}^2-\norm{y}^2)^{\alpha/2}}{\norm{x-y}^{d}\norm{y}^{\alpha}}{\rm d} y,\qquad 0<\norm{y}<\norm{x}.
\label{POCR}
\end{equation}

Similarly, in Kyprianou et al. \cite{KPS}, it was also shown that the Doob $h$-transform to condition the stable process to hit  a patch $\Omega$  on $\mathbb{S}^{d}(1)$ is constructed as per Theorem \ref{condition1}, using the function 
\begin{equation}
\mathtt{H}^\downarrow(x; \Omega) = \left\{\begin{array}{ll}
 |\norm{x}^2-1|^{\alpha/2}\dint_\Omega \norm{\theta-x}^{-d} \sigma_1({\rm d}\theta)& \text{ if }\sigma_1(\Omega)>0,\\
&\\
|\norm{x}^2-1|^{\alpha/2}\norm{x-\arg(\vartheta)}^{-d}  &\text{ if }\Omega =\{\vartheta\},
\end{array}
\right.
\label{H_S}
\end{equation}
where $\sigma_1$ is the uniform surface measure on $\mathbb{S}^{d-1}$ with unit total mass and $\norm{x}>1$. Also \cite{KPS} the function $\mathtt{H}^\uparrow$ was identified, again in Cartesian coordinates, by 
\begin{equation}
 \mathtt{H}^\uparrow(x)= \int_0^{\norm{x}^2-1} (u+1)^{-d/2} u^{\alpha/2-1} \dd u, \qquad \norm{x}>1.
    \label{z=oo}
\end{equation}
Note that both in the case of \eqref{H_S} and \eqref{z=oo}, the functions are defined up to an unimportant multiplicative constant. 

\smallskip

Isometry for the the stable process forces $\pi(\dd \theta) = \hat{\pi}^-(\dd \theta) = \sigma_1(\dd \theta)$, for $\theta\in \mathcal{S} = \mathbb{S}^{d-1}$. Isometry also implies that ${\mathtt n}^-_\theta(\zeta = \infty)$ is invariant of $\theta\in\mathbb{S}^{d-1}$ and hence can be treated as a constant. Isotropy also means that $\hat{\mathtt n}_{\theta}(\zeta)$ does not depend on $\theta$. We also note that $\hat{\mathtt n}_{\theta}(\zeta)$ equals the mean excursion time under the excursion measure of $\xi$ from its maximum, say $n^+_{\xi}$.  Since $\mathbf{P}[|\xi_1|]<\infty$, 
standard Wiener--Hopf theory for L\'evy processes ensures that $n^+_\xi(\zeta)<\infty$; see e.g. Chapter 2 of Kyprianou and Pardo \cite{ssmpbook}. 
Hence Assumption \ref{ass5} is also satisfied.

\smallskip

Turning to Assumptions \ref{ass7} and \ref{ass8}, by comparing \eqref{H_S} and \eqref{POCR} with \eqref{MAPPOCR}, \eqref{MAPPOCR2}  and \eqref{hdownarrow2}, we see that, up to a multiplicative constant,  from \eqref{POCR}, with $\norm{x} = 1$, we deduce
\begin{equation}
\mathtt{u}^-_\theta(u,\varphi) = (1-{\rm e}^{-2u})^{\alpha/2}\norm{\theta-{\rm e}^{-u}\varphi}^{-d}{\rm e}^{(\alpha-d) u} , \qquad 
u\geq0, \varphi\in\mathbb{S}^{d-1},
\label{usesk}
\end{equation} 
which can be re-organised to agree with \eqref{z=oo}, noting that $H^\downarrow(x; \{\vartheta\}) = \mathtt{u}^-_{\arg(x)}(\log\norm{x}, \varphi)$.

\begin{remark}
\rm
Although the above example pertains to stable L\'evy processes, supported by the work of \cite{ssmpbook} and \cite{KPS}, the reader can easily verify from these publications that the case $\alpha = 2 < d$ is equally valid. That is, the case of transient Brownian motion (i.e. in dimension three and greater).
\end{remark}

\section{Planar Brownian motion in a cone}\label{BMcone}

In this example we take $(X, \mathbb{P})$ to be an isotropic two-dimensional standard Brownian motion. It is known that we can write 
\begin{equation}
X_t = {\rm e}^{\xi_{\varphi_t}+{\rm i}\theta_{\varphi_t}}, \qquad 0\leq t\leq T_0,
\label{planar}
\end{equation}
where $(\xi_t, t\geq0)$ is a Brownian motion, which is independent of the winding number $(\theta_t, t\geq0)$
which is also a Brownian motion, $T_0 = \inf\{t>0: X_t = 0\}$ and,  as usual, $\varphi_t = \inf\{s>0: \int_0^s\exp(2\xi_u)\dd u>t\}$. See for example p194 of \cite{RY}. We can see from \eqref{planar} that the Lamperti--Kiu decomposition in \eqref{eq:lamperti_kiu} dictates that the pair $(\xi,\exp({\rm i}\theta) )$ is a MAP. This is otherwise trivial to observe.  In this special planar setting, we prefer to refer to the MAP as the pair $(\xi, \theta)$ instead with probabilities $\mathbb{P} = (\mathbf{P}_{x,\phi}, x\in\mathbb{R}, \phi\in (-\pi,\pi])$. 

\smallskip

We are interested in the construction of $X$ conditioned to stay in a cone. We can describe the cone in $\mathbb{R}^2$ through polar coordinates $0\leq r<\infty$ and $\phi\in(-\pi,\pi]$. Fix $\phi_0\in(0,\pi)$, thanks to the isotropy of $X$, it suffices to consider a cone  $\Gamma: = \{(r,\phi): 0<r<\infty, \, \phi\in(-\phi_0, \phi_0)\}$ described in polar coordinates.

\smallskip

Let us start by considering the two-dimensional Brownian motion killed on exiting $\Gamma$, that is,  $X^\Gamma_t: = X_t \one{t<T_\Gamma}$, $t\geq0,$ where $T^\Gamma = \inf\{t>0: X_t\not\in \Gamma\}$. It is known that $X^\Gamma$ is a self-similar Markov process, and, for convenience,  we write $(\xi^\Gamma, \theta^\Gamma)$ for its underlying MAP, where, as in \eqref{planar}, we write $\theta^\Gamma$ for the winding number.

\smallskip

 It is worth noting that, because of the winding properties of Brownian motion, it will  necessarily exit from the sides of the cone, rather than its apex at the origin.  To see this, note that $X$ exists $\Gamma$ if and only if the pair of processes $(\xi,\theta)$ exit the strip $\mathbb{R}\times (-\phi_0,\phi_0)$. Accordingly, the event that $X$ exits the apex of $\Gamma$ corresponds to the event that $\xi$ reaches $-\infty$ before $\theta$ exists $(-\phi_0,\phi_0)$, which occurs with probability 0.

\smallskip 

Next,  in the spirit of \eqref{MAPPOCR2}, we are interested in the potential of the descending ladder MAP of the pair $(\xi^\Gamma, \theta^\Gamma)$ corresponding to the setting that  $X$ is killed on exiting $\Gamma$. More precisely, for each $\phi\in(-\phi_0, \phi_0)$, let us define the aforesaid potential via the relation
\begin{equation}
\begin{split}
&\int_0^\infty\int_0^\infty \int_{(-\phi_0,\phi_0)} f(s, y, \theta)U^{-,\dagger}_\phi(\dd s, \dd y, \dd \theta)\\ 
&= \mathbf{E}_{0, \phi}\left[\int_0^\infty 
f(\underline{L}^{-1}_t, \xi_{\underline{L}^{-1}_t}, \theta_{\underline{L}^{-1}_t})\one{\underline{L}^{-1}_t < T^\Gamma}\dd t \right],
\label{UGamma}
\end{split}
\end{equation}
for bounded measurable functions $f: [0,\infty)\times \mathbb{R}^2\to [0,\infty)$. Here, $\underline{L}$ is the local time of $\xi$ at its minimum under $\mathbb{P}$. Thanks to path continuity, we have that $\xi_{\underline{L}^{-1}_t} = t$. Moreover, since $\xi$ and $\theta$ are independent, we can further develop the right-hand side of \eqref{UGamma}, 
\begin{align}
\int_0^\infty\int_0^\infty \int_{(-\phi_0,\phi_0)} f(s, y, \theta)U^{-,\dagger}_\phi(\dd s, \dd y, \dd \theta) &= 
\mathbf{E}_{0, \phi}\left[\int_0^\infty 
H_\phi^f(\underline{L}^{-1}_y, y)\dd y \right]\notag\\
&=
\int_0^\infty \dd y \int_0^\infty \mathbf{P}_{0,\phi}(\underline{L}^{-1}_y\in \dd s)H_\phi^f(s, y),
\label{needsbmsg}
\end{align}
where 
\[
H_\phi^f(s, y) = \mathbf{E}_{0,\phi}\left[f(s, y, \theta_s)\one{s<\tau^{(-\phi_0,\phi_0)}}\right],
\]
and $\tau^{(-\phi_0,\phi_0)} = \inf\{t>0: \theta_t \not\in(-\phi_0,\phi_0)\}$. 

\smallskip

On account of the fact that $\theta$ is also a Brownian motion, it is well known that the semigroup of a Brownian motion killed on exiting the interval $(-\phi_0,\phi_0)$ is given by 
\begin{equation}
\begin{split}
&\mathbf{P}_{0,\phi} (\theta_s\in \dd \theta, \, s<\tau^{(-\phi_0,\phi_0)})\\ 
&= \frac{1}{\phi_0}\sum_{k =1}^\infty{\rm e}^{- \pi^2 k^2 s/8\phi_0^2}
\sin\left(k \pi \frac{\phi+ \phi_0}{2\phi_0}\right)\sin\left(k \pi \frac{\theta+ \phi_0}{2\phi_0}\right)\dd \theta,
\label{thetakilled}
\end{split}
\end{equation}
for $\vartheta\in(-\phi_0,\phi_0)$. Hence, in \eqref{needsbmsg}, we get 
\begin{align}
&\int_0^\infty\int_0^\infty \int_{(-\phi_0,\phi_0)} f(s, y, \theta)U^{-,\dagger}_\phi(\dd s, \dd y, \dd \theta) \notag\\
&=\frac{1}{\phi_0}\sum_{k =1}^\infty\int_0^\infty \dd y \int_{(-\phi_0,\phi_0)}
\mathbf{E}_{0,\phi}\left[f(\underline{L}^{-1}_y,y, \theta){\rm e}^{- \pi^2 k^2 \underline{L}^{-1}_y/8\phi_0^2}\right]\\
&\qquad\times\sin\left(k \pi \frac{\phi+ \phi_0}{2\phi_0}\right)\sin\left(k \pi \frac{\theta+ \phi_0}{2\phi_0}\right)\dd \theta.
\label{needsbmsg2}
\end{align}
Recalling that $\underline{L}^{-1}$ is a 1/2-stable subordinator, so that 
\begin{equation*}
\mathbf{P}_{0,\phi}(\underline{L}^{-1}_y \in \dd s) = \frac{y}{\sqrt{2\pi}}{\rm e}^{-y^2/2s}s^{-3/2}\dd s,\quad  s\geq0,
\end{equation*}
and
\begin{equation*}
\mathbf{E}_{0,\phi}[{\rm e}^{-q\underline{L}^{-1}_y}] ={\rm e}^{-y\sqrt{2q}}, \quad q,y\geq 0,
\label{IG}
\end{equation*} 
we have from \eqref{needsbmsg2} that 
\begin{align}
&\int_0^\infty\int_0^\infty \int_{(-\phi_0,\phi_0)} f(s, y, \theta)U^{-,\dagger}_\phi(\dd s, \dd y, \dd \theta) \notag\\
&=\frac{1}{\phi_0}\sum_{k =1}^\infty\int_0^\infty\int_0^\infty\int_{(-\phi_0,\phi_0)}f(s, y, \theta)\notag\\
&\hspace{2cm}
\frac{y}{\sqrt{2\pi}}{\rm e}^{-y^2/2s}s^{-3/2}
{\rm e}^{- \pi^2 k^2 s/8\phi_0^2}
\sin\left(k \pi \frac{\phi+ \phi_0}{2\phi_0}\right)\sin\left(k \pi \frac{\theta+ \phi_0}{2\phi_0}\right) \dd y \dd s \dd \theta.
\label{needsbmsg2}
\end{align}

From \eqref{needsbmsg2}
 we conclude that $ U^{-,\dagger}_\phi( \dd s, \dd y, \dd \theta)$ has a density with respect to $\dd s\dd y\dd\theta$ on $[0,\infty)^2\times (-\phi_0,\phi_0)$, say $ u^{-,\dagger}_\phi( s, y,  \theta)$,  which satisfies
 \begin{equation}
 u^{-,\dagger}_\phi( s, y,  \theta) = \frac{y}{\phi_0\sqrt{2\pi}} \frac{{\rm e}^{-y^2/2s}}{s^{3/2}}\sum_{k =1}^\infty 
{\rm e}^{- \pi^2 k^2 s/8\phi_0^2}
\sin\left(k \pi \frac{\phi+ \phi_0}{2\phi_0}\right)\sin\left(k \pi \frac{\theta+ \phi_0}{2\phi_0}\right) \dd y \dd s \dd \theta. 
 \label{ugammaequals}
\end{equation}

Embedded in our calculations, and for future reference, we note that 
\begin{equation}
 u^{-,\dagger}_\phi( s, y,  \theta)\dd s \dd y\dd\theta= \mathbf{P}_{0,\phi}(\underline{L}^{-1}_y\in \dd s)\mathbf{P}_{0,\phi} (\theta_s\in \dd \theta, \, s<\tau^{(-\phi_0,\phi_0)})\dd y,
\label{udaggerboring}
\end{equation}
on $[0,\infty)^2\times (-\phi_0,\phi_0)$.

Paraphrasing \cite{BS97}, there exists a ground state eigenfunction $M>0$ on $(-\phi_0,\phi_0)$ satisfying
\begin{equation}
\frac{\dd^2}{\dd \phi^2}M(\phi) =- \lambda_1 M(\phi) \text{ in }D,  \text{ and } M(\phi)=0 \text{ for  }\phi =\pm\phi_0,
\label{explicit}
\end{equation}
for some $\lambda_1>0$. Note that, as it is defined, the function $M$ is determined up to a multiplicative constant. Moreover, for each fixed $t>0$, up to the right scaling in $M$,
\begin{equation}
\mathbb{P}_x(\tau^\Gamma>t) \sim M(\arg(x))|x|^{\lambda_1^{1/2}}t^{-\lambda_1^{1/2}/2}, \qquad |x|\to\infty,
\label{preconditioned}
\end{equation}
where we understand $\arg(x) = \phi$ if we write $x = |x|{\rm e}^{{\rm i}\phi}$. 
Because we are in the special case of planar Brownian motion, \eqref{explicit} can be solved explicitly. Indeed, it is straightforward to see that 
\begin{equation}
M(\phi) = \sin\left(\pi \frac{\phi+ \phi_0}{2\phi_0}\right) \text{ and }\lambda_1 = \frac{\pi^2}{4\phi_0^2}.
\label{explM}
\end{equation}

Using \eqref{preconditioned} and \eqref{explM}, we can defined the Brownian motion conditioned to remain in $\Gamma$ via the limit 
\[
\mathbb{P}^\Gamma_x(A): = \lim_{s\to\infty} \mathbb{P}_x(A, \, t<T^\Gamma| \, t+s < T^\Gamma), \qquad A\in \mathcal{F}_t, t>0.
\]
It is not difficult to show using Bayes formula, the Markov property and dominated convergence that 
\begin{equation}
\left.\frac{\dd \mathbb{P}^\Gamma_x}{\dd \mathbb{P}_x}\right|_{\mathcal{F}_t} = 
\frac{
\sin\left(\pi (\arg(X_t)+ \phi_0)/2\phi_0\right)
}{\sin\left(\pi( \arg(x)+ \phi_0)/2\phi_0\right)}\left(\frac{|X_t|}{|x|}\right)^{\pi/2 \phi_0}\one{t<T^\Gamma}, \qquad t>0, x\in \Gamma.
\label{coneCOM}
\end{equation}

Note that 
\begin{equation}
{\rm e}^{(\pi/2 \phi_0)\xi_{t} -(\pi^2/8\phi_0^2)t} , \qquad t\geq 0,
\label{mg1}
\end{equation}
is a $\mathbb{P}$-martingale {\color{black} with respect to the process $\xi$, and because of the independence it is a martingale with respect to the couple $(\xi,\theta)$, as is the process 
\begin{equation}
{\rm e}^{(\pi^2 /8 \phi_0^2)t }\frac{\sin\left(\pi (\theta_t+ \phi_0)/2\phi_0\right)}{\sin\left(\pi (\phi+ \phi_0)/2\phi_0\right)}\one{t<\tau^{(-\phi_0,\phi_0)} }, \qquad t\geq0,
\label{mg2}
\end{equation}
with respect of of the law of the process $\theta$ issued from $\theta_0 = \phi$, see e.g. \cite{Lambert2000} page 256.
Since $\varphi(t)$ is a $\mathbb{P}$-almost surely finite stopping time, it follows that  the product of \eqref{mg1} and \eqref{mg2} albeit sampled at $\varphi(t)$  constitutes a change of measure on the pair $(\xi,\theta)$, which coincides with the one in \eqref{coneCOM}}.  Indeed, \eqref{coneCOM}, has the effect of conditioning $(\theta_t, t\geq0)$ to remain in $(-\theta_0,\theta_0)$, the so called taboo process, as well as independently adding a drift of magnitude $\pi/2 \phi_0$ to $(\xi_t,\geq0)$.   The combined martingale change of measure as seen from the perspective  $((\xi,\theta),\mathbf{P})$ is written
\begin{equation}\label{70}
\left.\frac{\dd \mathbf{P}^\Gamma_{0,\phi}}{\dd \mathbf{P}_{0,\phi}}\right|_{\sigma((\xi_s, \theta_s), s\leq t)}=
{\rm e}^{(\pi/2 \phi_0)\xi_{t}} 
\frac{\sin\left(\pi (\theta_t+ \phi_0)/2\phi_0\right)}{\sin\left(\pi (\phi+ \phi_0)/2\phi_0\right)}\one{t<\tau^{(-\phi_0,\phi_0)} }, \qquad t\geq 0.
\end{equation}
In turn, remembering that $H^-_y: = -\xi_{\underline{L}^{-1}_y} = y$ this culminates in the descending ladder potential, $u^-(s,y,\theta)$, $s,y\geq0$, $\theta\in (-\phi_0,\phi_0)$, taking the form
\begin{align}
u^{-}_\phi( s, y,  \theta) \dd s\dd y\dd\theta& = 
{\rm e}^{-(\pi/2 \phi_0)y - (\pi^2/8\phi_0^2)s} \times
{\rm e}^{(\pi^2/8\phi_0^2)s}\frac{\sin\left(\pi (\theta+ \phi_0)/2\phi_0\right)}{\sin\left(\pi (\phi+ \phi_0)/2\phi_0\right)}u^{-,\dagger}_\phi( s, y,  \theta) \dd s\dd y\dd\theta\notag\\
&={\rm e}^{-(\pi/2 \phi_0)y - (\pi^2/8\phi_0^2)s}\mathbf{P}_{0,\phi}(\underline{L}^{-1}_y\in \dd s)\notag\\
&\hspace{1cm}\times 
{\rm e}^{(\pi^2/8\phi_0^2)s}\frac{\sin\left(\pi (\theta+ \phi_0)/2\phi_0\right)}{\sin\left(\pi (\phi+ \phi_0)/2\phi_0\right)} 
\mathbf{P}_{0,\phi} (\theta_s\in \dd \theta, \, s<\tau^{(-\phi_0,\phi_0)}) \dd y
\notag\\
&=\mathbf{P}_{0,\phi}^\Gamma\left( \underline{L}^{-1}_y\in \dd s, \, y<\zeta\right) \mathbf{P}_{0,\phi}^\Gamma\left( \theta^-_y\in \dd \theta\right)\dd y,
\label{ass8holds}
\end{align}
where necessarily, 
\[
\mathbf{P}_{0,\phi}^\Gamma\left( \theta^-_y\in \dd \theta\right) = {\rm e}^{(\pi^2/8\phi_0^2)s}\frac{\sin\left(\pi (\theta+ \phi_0)/2\phi_0\right)}{\sin\left(\pi (\phi+ \phi_0)/2\phi_0\right)} \mathbf{P}_{0,\phi}^\Gamma\left( \theta^-_y\in \dd \theta\right),
\]
and 
\begin{equation}
\mathbf{P}_{0,\phi}^\Gamma\left( \underline{L}^{-1}_y\in \dd s, \, y<\zeta\right) ={\rm e}^{-(\pi/2 \phi_0)y - (\pi^2/8\phi_0^2)s}\mathbf{P}_{0,\phi}(\underline{L}^{-1}_y\in \dd s).
\label{problem}
\end{equation}
On the one hand,  under $\mathbf{P}^\Gamma$, the resultant change of measure \eqref{mg1} tells us that $\xi$ has Laplace exponent $\psi_\xi(\lambda) = \lambda^2/2 + \pi\lambda/2 \phi_0 \propto \lambda (\lambda+ \pi/\phi_0)$. This suggests that $H^-$ and hence 
$\underline{L}^{-1}$ is killed at rate $\mathtt{n}^{-}(\zeta = \infty) = \pi/ \phi_0$ under $\mathbf{P}^\Gamma$. 
We can also see this by integrating \eqref{problem} over $s\geq0$. In particular, appealing to \eqref{IG}, we have, for $y\geq0$,
\begin{equation*}
\begin{split}
\mathbf{P}_{0,\phi}^\Gamma\left( y<\zeta\right)&=\int_{[0,\infty)}{\rm e}^{-(\pi/2 \phi_0)y - (\pi^2/8\phi_0^2)s}\mathbf{P}_{0,\phi}(\underline{L}^{-1}_y\in \dd s)\\
&={\rm e}^{-(\pi/2 \phi_0)y-y\sqrt{2(\pi^2/8\phi_0^2)}}={\rm e}^{-y(\pi/ \phi_0)}.
\end{split}
\end{equation*}

Putting the pieces together and integrating over $s$ using \eqref{IG}, we have 
\begin{equation}
 u^{-}_\phi(y,  \theta) = \frac{1}{\phi_0} \sum_{k =1}^\infty 
{\rm e}^{- \pi (k+1) y/2\phi_0}
\sin\left(k \pi \frac{\phi+ \phi_0}{2\phi_0}\right)\sin\left(k \pi \frac{\theta+ \phi_0}{2\phi_0}\right).
\label{uminusfinal}
\end{equation}

Next we verify that Assumptions \ref{ass1}, \ref{ass2}, \ref{ass3}, \ref{ass4}, \ref{ass5}, \ref{ass6} and \ref{ass8} (which implies \ref{ass7}). Assumption \ref{ass1} is clear, indeed $(\theta_t, t\geq0)$ under $\mathbb{P}^\Gamma$ is the so-called and well-studied `taboo process'. For Assumption \ref{ass2}, we note that $\pi(\dd\theta)$ is nothing more than the invariant distribution for the taboo process, which is well known to be proportional to $\sin^2\left(\pi (\phi+ \phi_0)/2\phi_0\right)$, $\phi\in(-\phi_0,\phi_0)$, see e.g. \cite{Lambert2000}. The required duality is an immediate consequence of the fact that $\xi$ and $\theta$ are independent and $\theta$ is {\color{black} dual to Brownian motion killed at the first exist from the interval $(-\phi,\phi),$ with respect to $\pi$, which is enough for our purposes.
Assumptions \ref{ass3} and \ref{ass4} are immediate by virtue of the fact that $\xi$ is a Brownian motion with positive drift under $\mathbb{P}^\Gamma$.} {The dual to $\xi$ is Brownian motion with drift $-\pi/2 \phi_0$. Hence, under $\mathbb{P}^\Gamma$, the associated inverse local time at the minimum is  a tempered stable subordinator  and necessarily independent of the dual to $\theta$. This is easily deduced by noting the inverse local time is also a first passage time, i.e. a stopping time, and a Brownian motion with negative drift can be related to a standard Brownian motion via the Girsanov transform; this in turn converts stable-distributed first passage times to tempered stable-distributed first passage times. The  inverse local time at the minimum of the dual of $\xi$ being  a tempered stable subordinator thus  implies that the condition in Assumption \ref{ass5} is also satisfied under $\mathbb{P}^\Gamma$. Assumption \ref{ass6} is trivially satisfied, thanks again to the independence of $\xi$ and $\theta$ and that they are both processes with transition densities.
Finally 
Assumption \ref{ass8} holds automatically on account of the density \eqref{ass8holds}.
 }

\smallskip

%
%
%
%
%

We can now read out of Theorem \ref{Wthrm} that the path of planar Brownian motion conditioned to remain in the cone $\Gamma$ is deconstructed as follows. 
\smallskip

The point of closest reach to the apex of the cone is determined by the variable $X^*$ such that, writing $X^* = \exp(-y^*+ {\rm i}\theta^*)$, then 
\[
\mathbb{P}^\Gamma_x(y^*\in \dd y,\, \theta^*\in \dd \theta) = \frac{1}{\phi_0} \sum_{k =1}^\infty 
{\rm e}^{- \pi (k+1) y/2\phi_0}
\sin\left(k \pi \frac{\phi+ \phi_0}{2\phi_0}\right)\sin\left(k \pi \frac{\theta+ \arg(x)}{2\phi_0}\right) \dd y  \dd \theta,
\]
From \eqref{Hdown}, 
we have 
 \[
H^\downarrow(x{\rm e}^{-y}; \{\theta\}) = u^-_{\arg(x)}(\log||x||-y, \theta) .
\]
Hence, from Theorem \ref{Wthrm}, on the event $\{y^*\in \dd y,\, \theta^*\in \dd \theta\}$, with $y<||x||$, we have, for example, that the path preceding the radial minimum (or point of closest reach to the origin)
is given by the change of measure
\begin{equation*}
\begin{split}
	\left.\frac{\dd \mathbb{P}^{\downarrow, \Gamma, (y)}_x}{\dd \mathbb{P}_x}\right|_{\mathcal{F}_t \cap \{ t < \zeta \}} &= \one{t<T^\Gamma \wedge T_{\exp(y)}}\frac{
\sin\left(\pi (\arg(X_t)+ \phi_0)/2\phi_0\right)
}{\sin\left(\pi( \arg(x)+ \phi_0)/2\phi_0\right)}\left(\frac{|X_t|}{|x|}\right)^{\pi/2 \phi_0}\notag\\
&\times \frac{ \sum_{k =1}^\infty 
{\rm e}^{- \pi (k+1) (\log||X_t||-y)/2\phi_0}
\sin\left(k \pi \frac{\arg(X_t)+ \phi_0}{2\phi_0}\right)\sin\left(k \pi \frac{\theta+ \phi_0}{2\phi_0}\right) }{ \sum_{k =1}^\infty 
{\rm e}^{- \pi (k+1) (\log ||x|| -y)/2\phi_0}
\sin\left(k \pi \frac{\arg(x)+ \phi_0}{2\phi_0}\right)\sin\left(k \pi \frac{\theta+ \phi_0}{2\phi_0}\right) }, 
\end{split}
\end{equation*}
for $t>0$ and $x\in \Gamma$.

\section*{Acknowledgements}
AEK acknowledges support from the EPSRC grant EP/S036202/1. MM was supported by a scholarship from the EPSRC Centre for Doctoral Training in Statistical Applied Mathematics at Bath (SAMBa) under the project code EP/S022945/1. This paper was concluded while VR was visiting the Department of Statistics at the University of Warwick, United Kingdom; he would like to thank his hosts for partial financial support as well as for their kindness and hospitality. In addition, VR is grateful for additional financial support  from  CONAHCyT-Mexico, grant nr. 852367.

\end{document}